\newcommand{\R}{\mathbb{R}}
\newcommand{\Q}{\mathbb{Q}}
\newcommand{\N}{\mathbb{N}}
\newcommand{\calP}{\mathcal{P}}
\newcommand{\ve}{\varepsilon}
\newcommand{\abs}[1]{\left\lvert#1\right\rvert}
\newcommand{\pical}{\mathcal{P}}
\newcommand{\lcal}{\mathcal{L}}
\newcommand{\haus}{\mathcal H}
\newcommand{\huno}{\haus^1}
\newcommand{\res}{\mathop{\hbox{\vrule height 7pt width .5pt depth 0pt \vrule height .5pt width 6pt depth 0pt}}\nolimits}
\newcommand{\restr}{\textrm{\scriptsize{|}}}
\DeclareMathOperator*{\spt}{supp}
\DeclareMathOperator*{\argmax}{arg\,max}
\DeclareMathOperator{\isot}{isot}
\numberwithin{equation}{section}
\theoremstyle{plain}
\newtheorem{thm}{Theorem}[section]
\newtheorem{lemma}[thm]{Lemma}
\newtheorem{prop}[thm]{Proposition}
\newtheorem{cor}[thm]{Corollary}
\newtheorem*{lem*}{Lemme}
\newtheorem*{mainthm*}{Main Theorem}
\theoremstyle{definition}
\newtheorem{defi}[thm]{Definition}
\theoremstyle{remark}
\newtheorem{rem}[thm]{Remark}
\tikzset { domaine/.style 2 args={domain=#1:#2} }
\tikzset{
xmin/.store in=\xmin, xmin/.default=-3, xmin=-3,
xmax/.store in=\xmax, xmax/.default=3, xmax=3,
ymin/.store in=\ymin, ymin/.default=-3, ymin=-3,
ymax/.store in=\ymax, ymax/.default=3, ymax=3,
}
\begin{document}

\title[Optimal transport for concave costs]{Full characterization of optimal transport plans for concave costs}
\author{Paul Pegon, Davide Piazzoli, Filippo Santambrogio}
\date{2015}

\begin{abstract}

This paper slightly improves a classical result by Gangbo and McCann (1996) about the structure of optimal transport plans for costs that are strictly concave and increasing functions of the Euclidean distance.
 Since the main difficulty for proving the existence of an optimal map comes from the possible singularity of the cost at $0$, everything is quite easy if the supports of the two measures are disjoint;
 Gangbo and McCann proved the result under the assumption $\mu(\mathrm{supp}(\nu))=0$;
 in this paper we replace this assumption with the fact that the two measures are singular to each other. In this case it is possible to prove the existence of an optimal transport map, provided the starting measure $\mu$ does not give mass to small sets (i.e. $(d-1)$-rectifiable sets). When the measures are not singular the optimal transport plan decomposes into two parts, one concentrated on the diagonal and the other being a transport map between mutually singular measures.
\end{abstract}

\maketitle

\subjclass{{\bf MSC 2010} :}  49J45, 49K21, 49Q20, 28A75

\keywords{{\bf Keywords : }Monge-Kantorovich, transport maps, approximate gradient, rectifiable sets, density points}

\tableofcontents

\section{Introduction}

Optimal transport is nowadays a very powerful and widely studied theory for many applications and connections with other pieces of mathematics. The minimization problem 
$$(M)\quad\min \left\{\quad\int c(x,T(x))d\mu(x)\;:\;T_\#\mu=\nu\right\}$$
proposed by Monge in 1781 (see \cite{Mon}) has been deeply understood thanks to the relaxation proposed by Kantorovich in \cite{Kan} in the form of a linear programming problem

\begin{equation}\label{kantorovich}
(K)\quad\min\left\{\int_{\R^d\times\R^d}\!\!c\,\,d\gamma\;:\;\gamma\in\Pi(\mu,\nu)\right\}.
\end{equation}

Here $\mu$ and $\nu$ are two probability measures on $\R^d$ and $c$ a cost function $c:\R^d\times\R^d\to\R$.
The set $\Pi(\mu,\nu)$ is the set of the so-called {\it transport plans}, i.e. $\Pi(\mu,\nu)=\{\gamma\in\pical(\R^d\times\R^d):\,(\pi_x)_{\#}\gamma=\mu,\,(\pi_y)_{\#}\gamma=\nu\}$
where $\pi_x$ and $\pi_y$ are the two projections of $\R^d\times\R^d$
onto $\R^d$. These probability measures over $\R^d\times\R^d$ are an alternative way to describe the displacement of the particles of $\mu$: 
instead of saying, for each $x$, which is the destination $T(x)$ of the particle originally located at $x$, we say for each pair $(x,y)$ how many particles go from $x$ to $y$. 
It is clear that this description allows for more general movements, since from a single point $x$ particles can a priori move to different destinations $y$. 
If multiple destinations really occur, then this movement cannot be described through a map $T$.

 The Kantorovich problem is interesting in itself and carries many of the features of Monge's one.
Since it can be rigorously proven to be its relaxation in the sense of l.s.c. envelops, the minimal value of the two problems is the same, provided $c$ is continuous. 
For many applications, dealing with the optimum of (K) is enough. 
Yet, a very classical question is whether the optimizer $\gamma$ of (K) is such that for almost every $x$ only one point $y$ can be such that $(x,y)\in \spt(\gamma)$. 
In this case $\gamma$ will be of the form $(id\times T)_{\#}\mu$ and will provide an optimal transport map for (M).

For several different applications, from fluid mechanics to differential geometry, the case which has been studied the most is the quadratic one, $c(x,y)=|x-y|^2$, first solved by Brenier in \cite{Brenier91}. 
Other costs which are strictly convex functions of $x-y$, for instance all the powers $|x-y|^p$, $p>1$, can be dealt with in a similar way. 
Next Section will give a general strategy to prove the existence of a transport map which fits very well this case. 
The limit case $c(x,y)=|x-y|$, which was by the way the original interest of Monge, has also received much attention.

Yet, another very natural case is that of concave costs, more precisely $c(x,y)=\ell(|x-y|)$ where $\ell:\R_+\to\R_+$ is a strictly concave and increasing function.
From the economical and modelization point of view, this is the most natural choice: moving a mass has a cost which is proportionally less if the distance increases, as everybody can notice from travel fares. 
In many practical cases, moving two masses on a distance $d$ each is more expensive than moving one at distance $2d$ and keeping at rest the other.
The typical example is the power cost $|x-y|^\alpha$, $\alpha<1$. Notice that all these costs satisfy the triangle inequality and are thus distances on $\R^d$. 
Among the other interesting features of these costs,  let us mention two. From the theoretical point of view, there is the fact that all power costs $|x-y|^\alpha$ with $\alpha<1$ satisfy Ma-Trudinger-Wang assumption for regularity, see \cite{MTW}. From the computational point of view, the subadditivity properties of these costs allow for some efficient algorithms using local indicators, at least in the one dimensional discrete case, see \cite{DelSalSob}.

Moreover, under strict convexity assumptions, these costs satisfy a strict triangle inequality (see Lemma \ref{strict dist}).
This last fact implies (see Theorem \ref{stay at rest}, but it is a classical fact) that the common mass between $\mu$ and $\nu$ must stay at rest. 
This gives a first constraint on how to build optimal plans $\gamma$: 
look at $\mu$ and $\nu$, take the common part $\mu\wedge\nu$, leave it on place, subtract it from the rest, and then build an optimal transport between the two remainders, which will have no mass in common. 
Notice that when the cost $c$ is linear in the Euclidean distance, then the common mass {\it may} stay at rest but is not forced to do so 
(the very well known example is the transport from $\mu=\lcal^1\res[0,1]$ and $\nu=\lcal^1\res[\frac 12,\frac 32]$, where both $T(x)=x+\frac 12$ and $T(x)=x+1$ on $[0,\frac 12]$ and $T(x)=x$ on $]\frac 12,1]$ are optimal); 
on the contrary, when the cost is strictly convex in the Euclidean distance, in general the common mass {\it does not} stay at rest (in the previous example only the translation is optimal for $c(x,y)=|x-y|^p$, $p>1$).
Notice that the fact that the common mass stays at rest implies that in general there is no optimal map $T$, 
since whenever there is a set $A$ with $\mu(A)>(\mu\wedge\nu)(A)=\nu(A)$ then almost all the points of $A$ must have two images: themselves, and another point outside $A$.

Yet, this suggests to study the case where $\mu$ and $\nu$ are mutually singular, and the best one can do would be proving the existence of an optimal map in this case. 
This is a good point, since the singularity of the function $(x,y)\mapsto \ell(|x-y|)$ is mainly concentrated on the diagonal $\{x=y\}$ (look at the example $|x-y|^\alpha$), 
and when the two measures have no common mass almost no point $x$ is transported to $y=x$. Yet, exploiting this fact needs some attention.

First, a typical assumption on the starting measure is required: we need to suppose that $\mu$ does not give mass to $(d\!-\!1)-$rectifiable sets. This is standard and common with other costs, such as the quadratic one. 
From the technical point of view, this is needed in order to guarantee $\mu-$a.e. differentiability of the Kantorovich potential, 
and counter-examples are known without this assumption (see next Section both for Kantorovich potentials and for counter-examples).

Hence, if we add this assumption on $\mu$, the easiest case is when $\mu$ and $\nu$ have disjoint supports, since in this case there is a lower bound on $|x-y|$ and this allow to stay away from the singularity. 
Yet, $\spt\mu\cap\spt\nu=\emptyset$ is too restrictive, 
since even in the case where $\mu$ and $\nu$ have smooth densities $f$ and $g$ it may happen that, after subtracting the common mass, the two supports meet on the region $\{f=g\}$. 

The problem has actually been solved in one of the first papers about optimal transportation, written by Gangbo and McCann in 1996, \cite{GaMc}, where they choose the slightly less restrictive assumption $\mu(\spt(\nu))=0$. 
This assumption covers the example above of two continuous densities, but does not cover other cases such as $\mu$ being the Lebesgue measure on a bounded domain $\Omega$ and $\nu$ being an atomic measure with an atom at each rational point, 
or other examples that one can build with fully supported absolutely continuous measures concentrated on disjoint sets $A$ and $\Omega\setminus A$. The present paper completes the proof by Gangbo and McCann, making use of recent ideas on optimal transportation to tackle the general case; 
i.e. we solve the problem under the only assumption that $\mu$ and $\nu$ are singular to each other (and that $\mu$ does not give mass to ``small'' (i.e. $(d\!-\!1)-$rectifiable) sets). From the case of mutually singular measures we can deduce how to deal with the case of measures with a common mass. The title of the paper exactly refers to this fact: by ``characterization of optimal transport plans'' we mean ``understanding their structure, composed by a diagonal part and a transport map out of the diagonal'', the word ``full'' stands for the fact that we arrive to the mimimal set of assumptions with respect to previous works, and by ``concave costs'' we indeed mean ``costs which are strictly concave and increasing functions of the euclidean distance''.

In Section 2 we present the main tools that we need. Section 2.1 is devoted to well-known facts from optimal transport theory: we recall the usual strategy to prove the existence of an optimal $T$ based on the Kantorovich potential 
and, after proving that the common mass stays at rest via a $c-$cyclical monotonicity argument, we adapt them to the concave case. 
Section 2.2 recalls the notion and some properties of approximate gradients, which we will use in the following Section.

In Section 3 we start generalizing Gangbo and McCann's result, in the case where $\mu$ is absolutely continuous. 
In this case we use the fact that almost every point $x$ is sent to a different point $y\neq x$ together with density points argument in order to prove that the Kantorovich potential is approximately differentiable almost everywhere, 
according to the notions that we presented in Section 2.2. 
Notice that this strategy is very much linked to many arguments recently used in optimal transportation in \cite{ChaDeP09,ChaDePJuu}, 
where the existence of an optimal map is proven by restricting the transport plan $\gamma$ to a suitable set of Lebesgue points which is $c-$cyclically monotone. 
Here we do not need to address explicitly such a construction but the idea is very much similar. The result we obtain in this section is not contained in \cite{GaMc} but it does not contain their result neither.

It is in Section 4 that we consider the most general case of an arbitrary measure $\mu$ not giving mass to small sets.
The approximate differentiability of the potential in Section 3 was based on Lebesgue points arguments which require to be adapted to this new framework. 
This is why we present an interesting Geometric Measure Theory Lemma (Lemma \ref{Paul's}) which states that, whenever $\mu$ does not give mass to small sets, 
then $\mu-$almost every point $x$ is such that every cone with vertex at $x$, of arbitrary size, direction and opening, has positive mass for $\mu$. 
This lemma is not new, but it is surprisingly not so well-known, at least in this very formulation, in the geometric measure theory community (a weaker version of this lemma is actually contained in the classical book \cite{Fed}). On the contrary, it starts being popular in the optimal transport community (see \cite{ChaDeP DCDS}, where the authors prove it and say that it has been presented to them by T. Rajala), which is strange if we think that it is really a GMT statement. Yet, even if not concerned directly with optimal transport, it has been popularized thanks to its applications in optimal transport theory. 

For the sake of completeness, we present the proof of this lemma in Section 4.1, even if we stress that the contribution of the paper is not in such a proof; but in the applications of this result to the differentiability of the Kantorovich potential that we face. This is what we do in Section 4.2, where we define an ad-hoc notion of gradient for the Kantorovich potential, by using this density result to prove that it is well-defined $\mu-$a.e. and that it satisfies all the properties we need. Finally, we prove that the optimal $\gamma$ is concentrated on a graph.

In this way we can now state the main theorem of this paper. Define $\mu\wedge\nu$ as the maximal positive measure which is both less or equal than $\mu$ and than $\nu$, and $(\mu-\nu)_+=\mu-\mu\wedge\nu$, 
so that the two measures $\mu$ and $\nu$ uniquely decompose into a common part $\mu\wedge\nu$ and two mutually singular parts $(\mu-\nu)_+$ and $(\nu-\mu)_+$.
\begin{mainthm*}
Suppose that $\mu$ and $\nu$ are probability measures on $\R^d$ such that $(\mu-\nu)_+$ gives no mass to all $(d\!-\!1)-$rectifiable sets, and take as a cost the function $c(x,y)=\ell(|x-y|)$, for $\ell:\R_+\to \R_+$ strictly concave and increasing. 
Then there exists a unique optimal transport plan $\gamma$, and it has the form $(id,id)_\#(\mu\wedge \nu)+(id,T)_\#(\mu-\nu)_+$.
\end{mainthm*}
This theorem is obtained as a corollary of Theorem \ref{complete} which concerns measures with no common mass, 
whereas Theorem \ref{simplified} is a simplified version of the same statement, where the assumption that $\mu$ is absolutely continuous plays an important role.

The paper ends with an appendix in two parts. One explains that, differently from convex costs, in the case of concave costs translations are never optimal, while the second one presents a discussion about the possibility of defining a sort of approximate gradient adapted to the measure $\mu$.

\section{Tools}

\subsection{General facts on optimal transportation}\label{sec:gene_facts}

We start the preliminaries of this paper with some important and well-known facts about Kantrovich linear programming problem
\begin{equation}\label{kantorovich}\tag{K}
\min\left\{\int_{\R^d\times\R^d}\!\!c\,\,d\gamma\;:\;\gamma\in\Pi(\mu,\nu)\right\},
\end{equation}
Here the cost function $c:\R^d\times\R^d\to\R_+$ is supposed to be continuous. We can suppose the supports of the two measures to be compact, for simplicity, but we do not need it (yet, in this case it is better to suppose $c$ to be uniformly continuous). 

We need to underline some main aspects of this problem (K). First, as any linear programming problem, it admits a dual problem, which reads 
\begin{equation}\label{dual}\tag{D}
\max\left\{\int\!\phi \,d\mu+\!\!\int\!\psi \,d\nu\;:\;\,\phi(x)\!+\!\psi(y)\leq c(x,y) \mbox{ for all }(x,y)\in\R^d\!\times\!\R^d\right\},
\end{equation}
where the supremum is computed over all pairs $(\phi,\psi)$ of continuous functions on $\R^d$.
We refer to \cite{villani} for this duality relation. Here are some of the properties of (K), (D) and the connection between them:
\begin{itemize}
\item (K) admits at least a solution $\gamma$, called {\it optimal transport plan};
\item (D) also has a solution $(\phi,\psi)$;
\item The functions $(\phi,\psi)$ are such that 
$$\psi(y)=\inf_x c(x,y)-\phi(x)\;\mbox{ and }\; \phi(x)=\inf_y c(x,y)-\psi(y)$$ 
(we say that they are conjugate to each other and $\phi=\psi^c$ is the $c-$transform of $\psi$ and $\psi=\phi^c$ is the $c-$transform of $\phi$). 
Hence, the dual problem can be expressed in terms of one only function $\phi$, taking $\psi=\phi^c$ (which automatically implies the constraint $\phi(x)+\phi^c(y)\leq c(x,y)$. Any optimal $\phi$ is called {\it Kantorovich potential}.
\item Given $\gamma$ and a pair $(\phi,\psi)$ then $\gamma$ is optimal for (K) and $(\phi,\psi)$ is optimal for (D) if and only if the equality $\phi(x) + \psi(y)= c(x,y)$ holds for all $(x,y)\in\spt\gamma$.
\end{itemize}
An interesting consequence of this last property is the fact that the support $\Gamma$ of any optimal $\gamma$ is $c-$cyclically monotone.

\begin{defi} Given a function $c:\R^d\times\R^d\to\R$, we say that a set $\Gamma\subset\R^d\times\R^d$ is $c-$cyclically monotone (briefly $c-$CM) if, 
for every $k\in\N$, every permutation $\sigma$ of $k$ elements and every finite family of points $(x_1,y_1),\dots,(x_k,y_k)\in \Gamma$ we have
$$\sum_{i=1}^k c(x_i,y_i)\leq \sum_{i=1}^k c(x_i,y_{\sigma(i)}).$$
The word ``cyclical'' refers to the fact that we can restrict our attention to cyclical permutations. The word ``monotone'' is a left-over from the case $c(x,y)=-x\cdot y$.
\end{defi}
Indeed, it is easy to check that $\Gamma$ is  $c-$CM from the fact that $\sum_{i=1}^k c(x_i,y_i)=\sum_{i=1}^k \phi(x_i)+\psi(y_i)=\sum_{i=1}^k \phi(x_i)+\psi(y_{\sigma(i)})\leq \sum_{i=1}^k c(x_i,y_{\sigma(i)}).$ 
This fact has a very interesting consequence in the case we consider in this paper, i.e. $c(x,y)=\ell(|x-y|)$ with $\ell$ increasing and strictly concave. As in \cite{GaMc}, we start from:

\begin{lemma}\label{strict dist}
Let $\ell:\R_+\to \R_+$ be a strictly concave and increasing function with $\ell(0)=0$. Then $\ell$ is strictly subadditive, i.e. $\ell(s+t)<\ell(s)+\ell(t)$ for every $s,t>0$. 
Also, if $x,y,z$ are points in $\R^n$ with $x\neq y$ and $y\neq z$, then $\ell(|x-z|)<\ell(|x-y|)+\ell(|y-z|)$. 
\end{lemma}
\begin{proof}
The subadditivity of positive concave functions is a classical fact which can be proven in the following way. Take $t,s>0$ and consider the function $g:[0,t+s]\to\R_+$ defined through $g(r)=\ell(t+s-r)+\ell(r)$. 
Then $g$ is strictly concave (as a sum of two strictly concave functions), and hence its minimal value is attained (only) on the boundary of the interval. 
Since $g(0)=g(t+s)=\ell(t+s)+\ell(0)=\ell(t+s)$ we get $\ell(s)+\ell(t)=g(t)>\min g = g(0)=\ell(t+s)$.

The second part of the statement is an easy consequence of the triangle inequality and the monotonicity of $\ell$:
$$\ell(|x-z|)\leq \ell(|x-y|+|y-z|)<\ell(|x-y|)+\ell(|y-z|).\qedhere$$
\end{proof}

This can be applied to the study of optimal transport plans as in Proposition 2.9 of \cite{GaMc}.
\begin{thm}\label{stay at rest}
Let $\gamma$ be an optimal transport plan for the cost $c(x,y)=\ell(|x-y|)$ with $\ell:\R_+\to \R_+$ strictly concave, increasing, and such that $\ell(0)=0$. 
Let $\gamma=\gamma_D+\gamma_O$, where $\gamma_D$ is the restriction of $\gamma$ to the diagonal $D=\{(x,x)\,:\,x\in\R^d\}$ and $\gamma_O$ is the part outside the diagonal, i.e. the restriction to $D^c=(\R^d\times\R^d)\setminus D$. 
Then this decomposition is such that $(\pi_x)_\#\gamma_O$ and $(\pi_y)_\#\gamma_O$ are mutually singular measures.
\end{thm}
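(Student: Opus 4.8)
The plan is to argue by contradiction, playing the $c$-cyclical monotonicity of the support of any optimal $\gamma$ against the strict triangle inequality of Lemma~\ref{strict dist}. Fix a $c$-CM Borel set $\Gamma$ with $\gamma(\Gamma)=1$ (e.g.\ $\Gamma=\spt\gamma$), and write $\mu_O:=(\pi_x)_\#\gamma_O$ and $\nu_O:=(\pi_y)_\#\gamma_O$. The key preliminary observation is that, since $\gamma_O\le\gamma$ is concentrated both on $\Gamma$ and on $D^c=\{x\neq y\}$, every non-negative measure dominated by $\gamma_O$ is concentrated on $\Gamma\cap D^c$; this is what will later allow us to feed triples of points into the $c$-monotonicity inequality.

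Suppose $\mu_O$ and $\nu_O$ are \emph{not} mutually singular, i.e.\ $\rho:=\mu_O\wedge\nu_O\neq 0$. From $\rho\le\nu_O=(\pi_y)_\#\gamma_O$ and $\rho\le\mu_O=(\pi_x)_\#\gamma_O$ I would extract two sub-plans $\gamma_1,\gamma_2\le\gamma_O$ with $(\pi_y)_\#\gamma_1=\rho$ and $(\pi_x)_\#\gamma_2=\rho$: writing $\rho=h_1\,\nu_O=h_2\,\mu_O$ with $0\le h_i\le 1$ (Radon--Nikodym), it suffices to take $\gamma_1:=(h_1\circ\pi_y)\,\gamma_O$ and $\gamma_2:=(h_2\circ\pi_x)\,\gamma_O$. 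Both are $\le\gamma_O$, hence concentrated on $\Gamma\cap D^c$, and both have total mass $\rho(\R^d)>0$. Then I would glue them along their common marginal $\rho$: by the standard gluing lemma (see e.g.\ \cite{villani}) there is $\Lambda\in\pical(\R^d\times\R^d\times\R^d)$ with $(\pi_{12})_\#\Lambda=\gamma_1$, $(\pi_{23})_\#\Lambda=\gamma_2$, and $\Lambda$ has the same positive total mass as $\rho$.

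For $\Lambda$-a.e.\ triple $(x,p,z)$ we then have $(x,p)\in\Gamma$ with $x\neq p$ (because $\gamma_1$ is concentrated on $\Gamma\cap D^c$) and $(p,z)\in\Gamma$ with $p\neq z$ (because $\gamma_2$ is concentrated on $\Gamma\cap D^c$). Applying the definition of $c$-cyclical monotonicity to the two points $(x,p),(p,z)\in\Gamma$ and the transposition of the two indices, and using $c(p,p)=\ell(0)=0$, we get
$$c(x,p)+c(p,z)\le c(x,z)+c(p,p)=c(x,z);$$
but Lemma~\ref{strict dist} gives $c(x,z)=\ell(|x-z|)<\ell(|x-p|)+\ell(|p-z|)=c(x,p)+c(p,z)$, precisely because $x\neq p$ and $p\neq z$. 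This contradiction forces the set of such triples to be $\Lambda$-negligible, hence $\Lambda\equiv 0$, against $\rho\neq 0$. Therefore $\mu_O$ and $\nu_O$ are mutually singular.

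The substeps here are routine: extracting $\gamma_1,\gamma_2$ is immediate from Radon--Nikodym (the relevant density is bounded by $1$), and the gluing lemma is classical. The only point deserving genuine care — and really the heart of the argument — is the bookkeeping of the concentration properties: every measure produced along the way must be checked to remain concentrated on the $c$-CM set $\Gamma$ and off the diagonal, so that on a set of positive $\Lambda$-mass the strict inequality of Lemma~\ref{strict dist} actually collides with $c$-cyclical monotonicity. Note that the diagonal part $\gamma_D$ plays no role whatsoever in the proof.
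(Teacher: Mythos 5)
Your proof is correct. Its pointwise engine is exactly the paper's: two pairs $(x,p),(p,z)\in\Gamma$ off the diagonal and sharing the middle point violate $c$-cyclical monotonicity once Lemma \ref{strict dist} is invoked, with $\ell(0)=0$ killing the term $c(p,p)$. Where you diverge is in how this pointwise incompatibility is promoted to mutual singularity. The paper stays at the level of sets: $(\pi_x)_\#\gamma_O$ and $(\pi_y)_\#\gamma_O$ are concentrated on $\pi_x(\spt\gamma\setminus D)$ and $\pi_y(\spt\gamma\setminus D)$ respectively, and the two-point argument shows these sets are literally \emph{disjoint} -- a single common point $z$ already produces the bad pair $(x,z),(z,y)$. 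Mutual singularity is then immediate, and one even gets the slightly stronger conclusion of disjoint carriers. You instead suppose $\rho:=\mu_O\wedge\nu_O\neq 0$, extract sub-plans by Radon--Nikodym and glue them along $\rho$ to manufacture a positive $\Lambda$-mass of bad triples. This costs you the gluing lemma and some bookkeeping, but it buys robustness: you never use that $\Gamma$ is closed, only that $\gamma$ is concentrated on \emph{some} $c$-CM Borel set, so the argument would survive in settings (discontinuous costs, non-compact supports) where optimality is characterized by concentration on a $c$-CM set rather than by a condition on $\spt\gamma$; it also sidesteps the measurability of projections of $\spt\gamma\setminus D$, which the paper handles implicitly via $\sigma$-compactness. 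Under the present hypotheses the two proofs are interchangeable, and the paper's is the shorter one.
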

\begin{proof}
It is clear that $\gamma_O$ is concentrated on $\spt\gamma\setminus D$ and hence $(\pi_x)_\#\gamma_O$ is concentrated on $\pi_x(\spt\gamma\setminus D)$ and $(\pi_y)_\#\gamma_O$ is concentrated on $\pi_y(\spt\gamma\setminus D)$. 
We claim that these two sets are disjoint. Indeed suppose that a common point $z$ belongs to both. 
Then, by definition, there exists $y$ such that $(z,y)\in \spt\gamma\setminus D$ and $x$ such that $(x,z)\in \spt\gamma\setminus D$. 
This means that we can apply $c-$cyclical monotonicity to the points $(x,z)$ and $(z,y)$ and get
$$\ell(|x-z|)+\ell(|z-y|)\leq \ell(|x-y|)+\ell(|z-z|)=\ell(|x-y|)<\ell(|x-z|)+\ell(|z-y|),$$
where the last strict inequality, justified by Lemma \ref{strict dist}, gives a contradiction.
\end{proof}

As we said in the introduction, the theorem above has some important consequences. In particular, it states that for this class of ``concave'' costs the common mass between $\mu$ and $\nu$ must stay at rest, which we can explain in details. 
Indeed, we have $(\pi_x)_\#\gamma_D=(\pi_y)_\#\gamma_D$ and since the remaining parts $(\pi_x)_\#\gamma_O$ and $(\pi_y)_\#\gamma_O$ are mutually singular, the decompositions 
$\mu=(\pi_x)_\#\gamma_D+(\pi_x)_\#\gamma_O$ and $\nu=(\pi_y)_\#\gamma_D+(\pi_y)_\#\gamma_O$ imply $(\pi_x)_\#\gamma_D=(\pi_y)_\#\gamma_D=\mu\wedge\nu$, $(\pi_x)_\#\gamma_O=(\mu-\nu)_+$ and $(\pi_y)_\#\gamma_O=(\nu-\mu)_+$. 
Hence, we must look in particular at the optimal transport problem between the two mutually singular measures $(\mu-\nu)_+$ and $(\mu-\nu)_-$. 
We will show that, under some natural regularity assumptions on the starting measure, this problem admits the existence of an optimal map. 

From now on, we will just assume w.l.o.g. that $\mu$ and $\nu$ are mutually singular. If not, just remove the common part, since we can deal with it separately.

Let us see which is the general strategy for proving existence of an optimal $T$ when the cost $c$ is of the form $c(x,y)=h(x-y)$ (not necessarily depending only on the norm $|x-y|$). Here as well the duality plays an important role.  
Indeed, if we consider an optimal transport plan $\gamma$ and a Kantorovich potential $\phi$ with $\psi=\phi^c$, we may write
$$\phi(x)+\psi(y)\leq c(x,y) \mbox{ on }\R^d\times\R^d \mbox{ and }\phi(x)+\psi(y)= c(x,y) \mbox{ on }\spt\gamma.$$

Once we have that, let us fix a point $(x_0,y_0)\in\spt\gamma$. One may deduce from the previous computations that
\begin{equation}\label{minsupp}
x\mapsto \phi(x)-h(x-y_0)\quad\mbox{ is maximal at }x=x_0
\end{equation}
and, if $\phi$ is differentiable at $x_0$ and $x_0$ is a interior point, one gets $\nabla\phi(x_0)=\nabla h(x_0-y_0)$ (we also assume that $h$ is differentiable at $x_0-y_0$).
The easiest case is that of a strictly convex function $h$, since one may inverse the relation passing to $\nabla h^*$ thus getting
$$x_0-y_0=\nabla h^*(\nabla \phi(x_0)).$$
This allows to express $y_0$ as a function of $x_0$, thus proving that there is only one point $(x_0,y)\in\spt\gamma$ and hence that $\gamma$ comes from a transport $T(x)=x-\nabla h^*(\nabla \phi(x))$. 
It is possible to see that strict convexity of $h$ is not the important assumption, but we need $\nabla h$ to be injective, which is also the case if $h(z)=\ell(|z|)$ since $\nabla h(z)=\ell'(|z|)\frac{z}{|z|}$ and the modulus of this vector identifies the modulus $|z|$ 
(since $\ell'$ is strictly increasing) and the direction gives also the direction of $z$. 
We will deal later with the case where $\ell$ is not differentiable at some points, and it will not be so difficult. 
The only difficult point is $0$ since $h$ would be highly singular at the origin, but fortunately if $\mu$ and $\nu$ have no mass in common then the case $x_0=y_0$ will be negligible.

However, we need to guarantee that $\phi$ is differentiable a.e. with respect to $\mu$. This is usually guaranteed by requiring $\mu$ to be absolutely continuous with respect to the Lebesgue measure, 
and using the fact that $\phi(x)=\inf_y h(|x-y|)-\psi(y)$ allows to prove Lipschitz continuity of $\phi$ if $h$ is Lipschitz continuous. 
This is indeed the main difficulty: concave functions on $\R^+$ may have an infinite slope at $0$ and be non-Lipschitz. 
The fact that the case of distance $0$ is negligible is not enough to restrict $h$ to a set where it is Lipschitz, if we do not have a lower bound on $|x-y|$. 
This can be easily obtained if one supposes that $\spt\mu\cap\spt\nu=\emptyset$, but this is in general not the case when $\mu$ and $\nu$ are obtained from the two original measures by removing the common mass. 
The paper by Gangbo and McCann makes a slightly less restrictive assumption, i.e. that $\mu(\spt\nu)=0$. 
This allows to say that $\mu-$almost any point is far from the support of $\nu$ and, since we only need local Lipschitz continuity, this is enough.

In the next two sections we will develop two increasingly hard arguments to provide $\mu-$a.e. differentiability for $\phi$. The next session will assume $\mu$ to be absolutely continuous and will make use of the notion of approximate gradient. 
Then, we will weaken the assumptions on $\mu$ supposing only that it does not give mass to $(d\!-\!1)-$rectifiable sets. 
This will require a different ad-hoc notion of gradient and a geometric measure theory lemma which replaces the notion of Lebesgue points. 
The first proof in the case $\mu\ll\lcal^d$ is given for the sake of simplicity and completeness. 
A consequence of this all is the existence of a transport map for $c(x,y)=\ell(|x-y|)$ every time that 
\begin{enumerate}[a)]
\item $\mu$ and $\nu$ are mutually singular 
\item $\mu$ does not give mass to ``small sets''. 
\end{enumerate}
This map can also be proven to be unique, as it is the case every time that we can prove that every optimal plan is indeed induced by a map. 
Moreover, if we only suppose b) and we admit the existence of a common mass between $\mu$ and $\nu$ (actually it is even enough to suppose that $\mu-\mu\wedge\nu$ does not give mass to ``small sets''), then the optimal $\gamma$ is unique and composed of two parts: 
one on the diagonal and one induced by a transport map, which implies that every point has at most two images, one of the two being the point itself.

We finish this section with a - standard - counterexample (see figure below) which shows that the assumption that  $\mu$ does not give mass to ``small sets'' is natural. Indeed, one can consider 
$$\mu=\huno\res A\;\mbox{ and }\;\nu=\frac{\huno\res B+\huno\res C}{2}$$
where $A$, $B$ and $C$ are three vertical parallel segments in $\R^2$ whose vertexes lie on the two line $y=0$ and $y=1$ and the abscissas are $0$, $1$ and $-1$, respectively, and $\huno$ is the $1-$dimensional Haudorff measure. 
It is clear that no transport plan may realize a cost better than $1$ since, horizontally, every point needs to be displaced of a distance $1$. 
Moreover, one can get a sequence of maps $T_n: A\to B\cup C$ by dividing $A$ into $2n$ equal segments $(A_i)_{i=1,\dots,2n}$ and $B$ and $C$ into $n$ segments each, $(B_i)_{i=1,\dots,n}$ and $(C_i)_{i=1,\dots,n}$ (all ordered downwards). 
Then define $T_n$ as a piecewise affine map which sends $A_{2i-1}$ onto $B_i$ and $A_{2i}$ onto $C_i$. 
In this way the cost of the map $T_n$ is less than $\ell(1+1/n)$, which implies that the infimum of the Kantorovich problem is $\ell(1)$, as well as the infimum on transport maps only. 
Yet, no map $T$ may obtain a cost $\ell(1)$, as this would imply that all points are sent horizontally, but this cannot respect the push-forward constraint. 
On the other hand, the transport plan associated to $T_n$ weakly converge to the transport plan $\frac 12 T^+_{\#}\mu+\frac 12 T^-_{\#}\mu,$ where $T^{\pm}(x)=x\pm e$ and $e=(1,0)$. 
This transport plan turns out to be the only optimal transport plan and its cost is $\ell(1)$.

In this example we have two measures with disjoint supports and no optimal transport map (every point $x$ is forced to have two images $x\pm e$), but we can add a common mass, for instance taking 
$$\mu=\huno\res A\;\mbox{ and }\;\nu=\frac\mu 2+\frac{\huno\res B+\huno\res C}{4}$$
and in such a case every point should have three images (part of the mass at $x$ is sent at $x\pm e$ and part stays at $x$).

\begin{tikzpicture}

\draw (0,5.4) node{$A$};
\draw (4,5.4) node{$B$};
\draw (-4,5.4) node{$C$};

\draw (-4,3.5) node[left]{$C_i$};
\draw (4,3.5) node[right]{$B_i$};
\draw (0,3.2) node[right]{$A_{2i}$};
\draw (0,3.8) node[left]{$A_{2i-1}$};

\draw[very thick] (0,1)--(0,5);
\draw[very thick] (4,1)--(4,5);
\draw[very thick] (-4,1)--(-4,5);

\draw[->] (-0.2,3.2)--(-3.8,3.4);
\draw[->] (0.2,3.8)--(3.8,3.6);

\fill[gray!40, opacity=0.5] (0,4)--(-4,4)--(-4,5)--(0,4.5)--cycle;
\fill[gray!40, opacity=0.5] (0,4.5)--(0,5)--(4,5)--(4,4)--cycle;

\fill[gray!40, opacity=0.5] (0,3)--(-4,3)--(-4,4)--(0,3.5)--cycle;
\fill[gray!40, opacity=0.5]  (0,3.5)--(0,4)--(4,4)--(4,3)--cycle;

\fill[gray!40, opacity=0.5] (0,2)--(-4,2)--(-4,3)--(0,2.5)--cycle;
\fill[gray!40, opacity=0.5] (0,2.5)--(0,3)--(4,3)--(4,2)--cycle;

\fill[gray!40, opacity=0.5] (0,1)--(-4,1)--(-4,2)--(0,1.5)--cycle;
\fill[gray!40, opacity=0.5] (0,1.5)--(0,2)--(4,2)--(4,1)--cycle;

\end{tikzpicture}

\subsection{The approximate gradient}

In this section we recall some notions about a measure-theoretical notion replacing the gradient for less regular functions. The interested reader can find many details in \cite{EvaGar}. 

Let us start from the following observation: given a function $f:\Omega\to\R$ and a point $x_0\in\Omega$, we say that  $f$ is differentiable at $x_0\in\Omega$ and that its gradient is $\nabla f(x_0)\in\R^d$ if for every $\epsilon>0$ the set 
$$\{x\in\Omega\,:\,
\abs{f(x)-f(x_0)-\nabla f(x_0)\cdot (x-x_0)} > \epsilon \abs{x-x_0}\}$$
is at positive distance from $x_0$, i.e. if there exist a ball around $x_0$ which does not meet it. 
 Instead of this requirement, we could ask for a weaker condition, namely that $x_0$ is a zero-density point for the same set (i.e. a Lebesgue point of its complement). More precisely, if there exists a vector $v$ such that 

$$\lim_{\delta\to 0}
\frac{\abs{\{x\in\Omega\,:\,
\abs{f(x)-f(x_0)-v\cdot (x-x_0)} > \epsilon \abs{x-x_0}\}}}
{\abs{B(x_0 ,\delta)}}
=0$$
then we say that $f$ is approximately differentiable at $x_0$ and its approximate gradient is $v$.
The approximate gradient will be denoted by $\nabla_{app}f(x_0)$. As one can expect, it enjoys several of the properties of $v(x_0)$, that we list here.
\begin{itemize}
\item The approximate gradient, provided it exists, is unique.
\item The approximate gradient is nothing but the usual gradient if $f$ is differentiable.
\item The approximate gradient shares the usual algebraic properties of gradients, in particular $\nabla_{app}(f+g)(x_0)=\nabla_{app}f(x_0)+\nabla_{app}g(x_0)$.
\item If $x_0$ is a local minimum or local maximum for $f$, and if $\nabla_{app}f(x_0)$ exists, then $\nabla_{app}f(x_0)=0$.
\end{itemize}
these four properties are quite easy to check.

Another very important property that we need is a consequence of the well-know Rademacher theorem, which states that Lipschitz functions are almost everywhere differentiable.

\begin{prop}\label{loc_lip_app_grad}
Let $f,g:\Omega\to\R$ be two functions defined on a same domain $\Omega$ with $g$ Lipschitz continuous. Let $A\subset\Omega$ be a Borel set such that $f=g$ on $A$. 
Then $f$ is approximately differentiable almost everywhere on $A$ and $\nabla_{app}f(x)=\nabla g(x)$ for a.e. $x\in A$.
\end{prop}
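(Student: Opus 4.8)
The plan is to reduce the statement to Rademacher's theorem via two standard facts about density points and approximate differentiability. The key observation is that if $g$ is Lipschitz then $g$ is differentiable $\mathcal{L}^d$-almost everywhere on $\Omega$ by Rademacher, hence almost every $x\in A$ is a point of differentiability of $g$; and by the Lebesgue density theorem, almost every $x\in A$ is a density-$1$ point of $A$, i.e. $|A\cap B(x,\delta)|/|B(x,\delta)|\to 1$ as $\delta\to 0$. So it suffices to show that at any point $x_0\in A$ which is simultaneously a density point of $A$ and a point of differentiability of $g$, the function $f$ is approximately differentiable with $\nabla_{app}f(x_0)=\nabla g(x_0)$.

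First I would fix such an $x_0$ and set $v=\nabla g(x_0)$. Given $\epsilon>0$, I want to estimate the density of the bad set
$$E_\epsilon=\{x\in\Omega:\abs{f(x)-f(x_0)-v\cdot(x-x_0)}>\epsilon\abs{x-x_0}\}.$$
The idea is to split according to whether $x\in A$ or not. On $A$ we have $f=g$, so
$$E_\epsilon\cap A=\{x\in A:\abs{g(x)-g(x_0)-v\cdot(x-x_0)}>\epsilon\abs{x-x_0}\}\subset\{x\in\Omega:\abs{g(x)-g(x_0)-\nabla g(x_0)\cdot(x-x_0)}>\epsilon\abs{x-x_0}\},$$
and since $g$ is (genuinely) differentiable at $x_0$, this last set is at positive distance from $x_0$, so its intersection with $B(x_0,\delta)$ is empty for $\delta$ small enough. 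On the complement, $E_\epsilon\setminus A\subset\Omega\setminus A$, so
$$\frac{\abs{E_\epsilon\cap B(x_0,\delta)}}{\abs{B(x_0,\delta)}}\le\frac{\abs{(E_\epsilon\cap A)\cap B(x_0,\delta)}}{\abs{B(x_0,\delta)}}+\frac{\abs{(\Omega\setminus A)\cap B(x_0,\delta)}}{\abs{B(x_0,\delta)}}.$$
The first term is $0$ for $\delta$ small by the differentiability of $g$ at $x_0$; the second term tends to $0$ because $x_0$ is a density point of $A$. Hence the density of $E_\epsilon$ at $x_0$ is zero for every $\epsilon>0$, which is exactly the definition of approximate differentiability of $f$ at $x_0$ with approximate gradient $v=\nabla g(x_0)$.

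There is essentially no hard part here: the only mild subtlety is the measurability of $A$ (guaranteed by the hypothesis that $A$ is Borel, which is needed to apply the Lebesgue density theorem to $A$ and to make sense of ``almost every $x\in A$''), and keeping track of the fact that $E_\epsilon$ need not be measurable as a subset of $\Omega$ but $E_\epsilon\setminus A$ is contained in the measurable set $\Omega\setminus A$, so the upper estimate above should be read with outer measure on the left-hand side — which is exactly what the definition of approximate differentiability in the previous paragraph requires. I would state this caveat briefly and then conclude.
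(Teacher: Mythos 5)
Your proof is correct and follows exactly the paper's approach: the paper's own (very terse) proof also restricts to points of $A$ that are simultaneously Lebesgue density points of $A$ and differentiability points of $g$, and leaves the verification you carried out as ``easy to check''. Your write-up simply supplies the details of that verification, including the correct splitting of the exceptional set into its intersection with $A$ and its complement.
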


\begin{proof}
It is enough to consider all the points in $A$ which are Lebesgue points of $A$ and at the same time differentiability points of $g$. These points cover almost all $A$. 
It is easy to check that the definition of approximate gradient of $f$ at a point $x_0$ is satisfied if we take $v=\nabla g(x_0)$.\end{proof}

\section{The absolutely continuous case}

Suppose now $\mu \ll \text{Leb}$, and proceed according to the strategy presented in Section 2.1.
  Take $x_0\in \spt(\mu)$: there exists a point $y_0\in\R^d$ such that $(x_0,y_0)\in\spt(\gamma)$. Denote by $\phi$ a Kantorovich potential and by $\phi^c$ its conjugate function.
From Equation \eqref{minsupp}, provided we can differentiate, we get
\begin{equation}\label{gradgrad}
0=\nabla \phi(x_0)-\nabla \ell(\abs{x_0-y_0})=\nabla\phi(x_0)-\ell'(\abs{x_0-y_0})\frac{x_0-y_0}{\abs{x_0-y_0}}\ ,
\end{equation}
 so that $x_0$ uniquely determines $y_0$, but unfortunately neither $\phi$ nor $\ell$ are smooth enough to differentiate. To comply with this, we first want to prove $\phi$ admits an 
approximate gradient Lebesgue-a.e. 
From what we saw in Section 2.2 this would imply (if $\ell$ is differentiable, we will see later how to handle the case where it is not) that Equation \eqref{gradgrad} is satisfied if we replace the gradient with the approximate gradient.

 Recall that we may suppose
\[
\phi(x)=\phi^{cc}(x)=\inf_{y\in\R^d}\ell(\abs{x-y})-\phi^c(y)\ .
\]

 Now 
consider a countable family of closed balls $B_{i}$ generating the topology of $\R^d$, and
for every $i$ consider the function defined as 
\[
\phi_{i}(x):=\inf_{y\in B_{i}}\ell(\abs{x-y})-\phi^c (y)\ .
\]
for $x\in \R^n$. One cannot provide straight Lipschitz properties for $\phi_i$, since a priori $y$ is arbitrarily
close to $x$ and in general $\ell$ is not Lipschitz close to $0$. However $\phi_i$ is Lipschitz on every $B_j$ such that $\text{dist}(B_i,B_j)>0$. Indeed if $x\in B_j$,
$y\in B_i$ one has $\abs{x-y}\geq d>0$, therefore the Lipschitz constant of $\ell(\abs{\cdot-y})-\phi^c(y)$ does not exceed
$\ell'(d)$ (or the right derivative of $\ell$ at $d$). It follows that $\phi_i$ is Lipschitz on $B_j$, and its constant does not exceed
$\ell'(d)$.

Then, by Proposition \ref{loc_lip_app_grad}, $\phi$ has an approximate gradient almost everywhere
on $\{\phi=\phi_i\}\cap B_{j}$. By countable union, $\phi$ admits
an approximate gradient a.e. on
\[
\bigcup_{\substack{i,j \\ d(B_i,B_j)>0}}[\{\phi_i =\phi\}\cap B_j]\ .
\]
 As a consequence of this and of the absolute continuity of $\mu$, in order to prove that $\phi$ has an approximate gradient $\mu$-almost everywhere,
it is enough to prove that

\[
\mu \left(\array{@{}c@{}}
\displaystyle\bigcup\\
  \substack{i,j \\ d(B_i,B_j) > 0}\endarray 
\raisebox{+1ex}{$\{\phi_i = \phi \} \cap B_j$}\right) = 1.
\]


%

\begin{lemma}\label{phi_i}

For every $i$ and $j$ 
\[
\pi_x(\spt\gamma\cap (B_j\times B_i))\subset \{\phi=\phi_i\}\cap B_j\ . 
\]
\end{lemma}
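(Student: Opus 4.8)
The plan is to take an arbitrary point of the left-hand set and chase the definitions, invoking only two facts recalled in Section~\ref{sec:gene_facts}: that an optimal Kantorovich potential satisfies $\phi=\phi^{cc}$, so that $\phi(x)=\inf_{y\in\R^d}\ell(\abs{x-y})-\phi^c(y)$, and that the duality equality $\phi(x)+\phi^c(y)=c(x,y)$ holds for every $(x,y)\in\spt\gamma$.

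First I would fix $i$ and $j$ and pick $x_0\in\pi_x(\spt\gamma\cap(B_j\times B_i))$; by definition this means $x_0\in B_j$ and there is some $y_0\in B_i$ with $(x_0,y_0)\in\spt\gamma$. The goal is then reduced to showing $\phi(x_0)=\phi_i(x_0)$, after which $x_0\in\{\phi=\phi_i\}\cap B_j$ is immediate.

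Next I would obtain the two opposite inequalities. For $\phi_i(x_0)\le\phi(x_0)$: since $(x_0,y_0)\in\spt\gamma$, the equality case of the Kantorovich duality gives $\phi(x_0)+\phi^c(y_0)=\ell(\abs{x_0-y_0})$, i.e.\ $\phi(x_0)=\ell(\abs{x_0-y_0})-\phi^c(y_0)$; as $y_0\in B_i$, this value is one of the competitors in the infimum defining $\phi_i(x_0)$, whence $\phi_i(x_0)\le\phi(x_0)$. For $\phi(x_0)\le\phi_i(x_0)$: using $\phi=\phi^{cc}$, the infimum defining $\phi(x_0)$ runs over all of $\R^d\supset B_i$, and enlarging the index set of an infimum can only decrease it, so $\phi(x_0)=\inf_{y\in\R^d}(\ell(\abs{x_0-y})-\phi^c(y))\le\inf_{y\in B_i}(\ell(\abs{x_0-y})-\phi^c(y))=\phi_i(x_0)$. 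Combining the two inequalities yields $\phi(x_0)=\phi_i(x_0)$, which completes the proof.

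As for the hard part: there really is none — the statement is a bookkeeping consequence of the definition of $\phi_i$ together with the structural properties of optimal potentials. The only point to keep in mind is that both ingredients genuinely require $(\phi,\phi^c)$ to be an optimal pair, which is the standing assumption, and that no differentiability of $\phi$ or of $\ell$, and no regularity of $\mu$, is used here; those enter only afterwards, when one passes from this inclusion and the absolute continuity of $\mu$ to the $\mu$-a.e.\ approximate differentiability of $\phi$.
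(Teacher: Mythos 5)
Your proof is correct and follows exactly the same route as the paper's: the inequality $\phi_i(x_0)\le\phi(x_0)$ from the duality equality on $\spt\gamma$ with the competitor $y_0\in B_i$, and the reverse inequality from $\phi=\phi^{cc}$ being an infimum over the larger set $\R^d\supset B_i$. Nothing to add.
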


\begin{proof}
Let $(x,y)\in\spt\gamma\cap (B_j\times B_i)$. Then $\phi(x)+\phi^c(y)=l(\abs{x-y})$. It follows that
\[
\phi_i(x)=\inf_{y'\in B_i}\ell(\abs{x-y'})-\phi^c(y')\leq\ell(\abs{x-y})-\phi^c(y)=\phi(x)\ . 
\]
 On the other hand, for every $x\in\R^n$
\[
\phi_i(x)=\inf_{y\in B_i}\ell(\abs{x-y})-\phi^c (y)\geq \inf_{y\in \R^n}\ell(\abs{x-y})-\phi^c(y)=\phi(x)\ .\qedhere
\] 
\end{proof}

 As a consequence of this,

\begin{align*}
\mu \left(\array{@{}c@{}}
\displaystyle\bigcup\\
  \substack{i,j \\ d(B_i,B_j) > 0}\endarray 
\raisebox{+1ex}{$\{\phi_i = \phi \} \cap B_j$}\right)
&\geq \mu \left(\array{@{}c@{}}
  \displaystyle\bigcup\\
  \substack{i,j \\ d(B_i,B_j) > 0}\endarray 
\raisebox{+1ex}{$\pi^x(\spt \gamma \cap (B_j \times B_i))$}\right)\\
&= \mu \left( \pi^x \left( \spt \gamma \cap \array{@{}c@{}}
  \displaystyle\bigcup\\
  \substack{i,j \\ d(B_i,B_j) > 0}\endarray 
\raisebox{+1ex}{$B_j \times B_i$} \right)\right)\\
&= \mu(\pi^x(\spt \gamma \setminus D))\\
&= \gamma \left[ (\pi^x)^{-1}(\pi^x(\spt \gamma \setminus D))\right]\\
&\geq \gamma(\spt \gamma \setminus D) = 1\\ 
\end{align*}
%
%
 since the diagonal is $\gamma$-negligible. In other words the
following theorem is proved

\begin{thm}
Let $\ell:[0,+\infty)\to\R_+$ be a concave function, and suppose $\mu$ and $\nu$ are two probability measures on $\R^n$, with
$\mu\ll\text{Leb}$. Call $\phi$ a Kantorovitch potential for the transport problem with cost $\ell(\abs{x-y})$. Then 
$\phi$ admits an approximate gradient $\mu$-a.e.
\end{thm}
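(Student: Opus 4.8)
The plan is to make rigorous the heuristic around \eqref{minsupp}--\eqref{gradgrad} by \emph{localizing} the only genuine difficulty of the problem: since $\ell$ may have infinite slope at the origin, the potential $\phi$ --- an infimal convolution of $\ell(|\cdot|)$ against $-\phi^c$ --- carries no Lipschitz bound, not even locally, so Rademacher's theorem cannot be applied to it directly. Throughout I use the standing reduction that $\mu$ and $\nu$ are mutually singular; by Theorem~\ref{stay at rest} this forces $\mu\wedge\nu=0$, hence the diagonal $D$ is $\gamma$-negligible for any optimal plan $\gamma$. I may also normalize $\phi$ so that $\phi=\phi^{cc}$, i.e. $\phi(x)=\inf_{y\in\R^n}\ell(|x-y|)-\phi^c(y)$. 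The point is that in this infimum the singular behaviour of $\ell$ only comes from destinations $y$ close to $x$, and that neighbourhood of the diagonal is exactly the set that mutual singularity makes negligible.

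Concretely, I would fix a countable family of closed balls $(B_i)_i$ generating the topology of $\R^n$ and put $\phi_i(x):=\inf_{y\in B_i}\ell(|x-y|)-\phi^c(y)$. Each $\phi_i$ is Borel and satisfies $\phi_i\geq\phi$ pointwise. The key elementary estimate is that whenever $d:=\mathrm{dist}(B_i,B_j)>0$ the function $\phi_i$ is Lipschitz on $B_j$ with constant at most $\ell'(d^+)$ (the right derivative of $\ell$ at $d$, finite by concavity): for $x\in B_j$, $y\in B_i$ one has $|x-y|\geq d$, $\ell$ is $\ell'(d^+)$-Lipschitz on $[d,+\infty)$, and an infimum of a family of $L$-Lipschitz functions is $L$-Lipschitz. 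Then Proposition~\ref{loc_lip_app_grad}, applied with $g=\phi_i$ (Lipschitz on $B_j$) on the Borel set $\{\phi=\phi_i\}\cap B_j$ on which $\phi=\phi_i$, shows that $\phi$ is approximately differentiable at $\text{Leb}$-a.e.\ point of that set, with $\nabla_{app}\phi=\nabla\phi_i$ there. Taking the countable union over admissible pairs $(i,j)$ gives approximate differentiability $\text{Leb}$-a.e.\ on
\[
E:=\bigcup_{\substack{i,j\\ d(B_i,B_j)>0}}\bigl(\{\phi_i=\phi\}\cap B_j\bigr),
\]
so, since $\mu\ll\text{Leb}$, it remains only to prove $\mu(E)=1$.

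For this I would invoke Lemma~\ref{phi_i}, which gives $\pi_x(\spt\gamma\cap(B_j\times B_i))\subset\{\phi=\phi_i\}\cap B_j$, whence $E\supset\pi_x\bigl(\spt\gamma\cap\bigcup_{d(B_i,B_j)>0}B_j\times B_i\bigr)$. Because the balls generate the topology, every $(x,y)\in\spt\gamma$ with $x\neq y$ sits in some $B_j\times B_i$ with $\mathrm{dist}(B_i,B_j)>0$; hence the union inside contains $\spt\gamma\setminus D$ and $E\supset\pi_x(\spt\gamma\setminus D)$. Using $(\pi_x)_\#\gamma=\mu$ and $(\pi_x)^{-1}(\pi_x(\spt\gamma\setminus D))\supset\spt\gamma\setminus D$,
\[
\mu(E)\ \geq\ \mu\bigl(\pi_x(\spt\gamma\setminus D)\bigr)\ =\ \gamma\bigl((\pi_x)^{-1}(\pi_x(\spt\gamma\setminus D))\bigr)\ \geq\ \gamma(\spt\gamma\setminus D)\ =\ 1,
\]
the last equality since $\gamma(D)=0$; this proves the theorem.

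The main obstacle is conceptual rather than computational: it is precisely the absence of a Lipschitz (even local) bound on $\phi$ caused by the singularity of $\ell$ at $0$, and the whole construction above is designed to confine that singularity to a $\gamma$-negligible neighbourhood of the diagonal by splitting the defining infimum over balls that stay a positive distance away from the current region. The only points requiring a little care are the Borel measurability of the $\phi_i$ and of the sets $\{\phi=\phi_i\}\cap B_j$ (both $\phi$ and $\phi_i$ being semicontinuous), the use of one-sided derivatives $\ell'(d^+)$ at the at most countably many points where $\ell$ is not differentiable, and the fact that the analytic set $\pi_x(\spt\gamma\setminus D)$ is $\mu$-measurable --- none of these is serious. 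I would also record the byproduct $\nabla_{app}\phi(x)=\nabla\phi_i(x)$ on $\{\phi=\phi_i\}\cap B_j$, which is exactly what is needed to return to \eqref{gradgrad} and recover $y_0$ from $x_0$ in the next step.
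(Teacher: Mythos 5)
Your proof is correct and follows essentially the same route as the paper: the same localization $\phi_i(x)=\inf_{y\in B_i}\ell(|x-y|)-\phi^c(y)$ over a countable basis of balls, the same Lipschitz bound $\ell'(d^+)$ on $B_j$ when $\mathrm{dist}(B_i,B_j)>0$, the same appeal to Proposition~\ref{loc_lip_app_grad} on each set $\{\phi=\phi_i\}\cap B_j$, and the same measure-theoretic chain showing that the union of these sets has full $\mu$-measure because the diagonal is $\gamma$-negligible. Nothing further is needed.
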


We now come back to the proof of  the main theorem of the paper, under the additional assumption that $\mu$ is absolutely continuous.
\begin{thm}\label{simplified}
Suppose that $\mu$ and $\nu$ are two mutually singular probability measures on $\R^d$ such that $\mu\ll\lcal^d$, and take the cost $c(x,y)=\ell(|x-y|)$, for $\ell:\R_+\to \R_+$ strictly concave and increasing. 
Then there exists a unique optimal transport plan $\gamma$ and it is induced by a transport map.
\end{thm}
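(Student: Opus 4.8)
The plan is to combine the almost-everywhere approximate differentiability of the Kantorovich potential $\phi$ (just established in the previous theorem) with the structure relation \eqref{minsupp} in order to recover $y_0$ from $x_0$ for $\mu$-a.e. $x_0$, thereby showing that any optimal plan $\gamma$ is concentrated on a graph. First I would fix an optimal $\gamma$ and an optimal pair $(\phi,\phi^c)$, and recall from Theorem \ref{stay at rest} and the mutual singularity of $\mu$ and $\nu$ that $\gamma$ gives no mass to the diagonal $D$; hence for $\mu$-a.e. $x_0$ every $y_0$ with $(x_0,y_0)\in\spt\gamma$ satisfies $y_0\neq x_0$, and for such pairs the optimality/duality equality forces $x\mapsto \phi(x)-\ell(|x-y_0|)$ to attain its maximum at $x_0$. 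Restricting to the full-$\mu$-measure set where $\phi$ is approximately differentiable and where moreover $|x_0-y_0|$ stays bounded away from $0$ (which holds $\mu$-a.e. since $D$ is $\gamma$-negligible and by an exhaustion over the sets $\{|x-y|\ge 1/k\}$), the function $x\mapsto \ell(|x-y_0|)$ is differentiable and Lipschitz near $x_0$, so by the algebraic properties of $\nabla_{app}$ and the maximality we get $\nabla_{app}\phi(x_0)=\ell'(|x_0-y_0|)\frac{x_0-y_0}{|x_0-y_0|}$.

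The next step is the inversion: since $\ell$ is strictly increasing and concave, $\ell'>0$ and $\ell'$ is injective where defined, so the vector $\ell'(r)\omega$ (with $r>0$, $\omega\in S^{d-1}$) determines both $r=|\ell'|^{-1}$ applied to its modulus and $\omega$ from its direction. Therefore from $\nabla_{app}\phi(x_0)$ we recover $|x_0-y_0|$ and the unit vector $(x_0-y_0)/|x_0-y_0|$, hence $y_0$ itself. This proves that for $\mu$-a.e. $x_0$ there is exactly one $y_0$ with $(x_0,y_0)\in\spt\gamma$, so $\gamma=(id,T)_\#\mu$ with $T(x)=x-(|\ell'|^{-1}(|\nabla_{app}\phi(x)|))\,\nabla_{app}\phi(x)/|\nabla_{app}\phi(x)|$; in particular $\gamma$ is induced by a map. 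Finally, uniqueness follows by the standard convexity argument: if $\gamma_0,\gamma_1$ are both optimal, so is $\frac12(\gamma_0+\gamma_1)$, which by what precedes must also be induced by a map, and a midpoint of two transport plans is induced by a map only if the two plans coincide; hence the optimal plan is unique, and it is the map we constructed.

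I would also need to handle the case where $\ell$ is not differentiable everywhere. Since $\ell$ is concave it is differentiable outside an at most countable set $N\subset(0,\infty)$; the set $\{x_0 : |x_0-y_0|\in N \text{ for some }(x_0,y_0)\in\spt\gamma\}$ is contained in a countable union of spheres $\{|x-y_0|=r\}$, but to control its $\mu$-measure one uses instead that the image measure $\nu$ gives at most countably many ``bad radii'' — more carefully, one disintegrates $\gamma$ and observes that for a.e. $x_0$ the value $|x_0-y_0|$ can be taken outside $N$ by replacing $\ell'$ with one of the one-sided derivatives and noting that the strict concavity still yields strict monotonicity of both one-sided derivatives, so the inversion argument goes through with $\ell'$ replaced by $\ell'_\pm$; the maximality condition \eqref{minsupp} gives $\nabla_{app}\phi(x_0)\in[\ell'_+(r),\ell'_-(r)]\cdot\omega$ and this still determines $r$ and $\omega$ uniquely outside the countably many $r\in N$, while the remaining $x_0$ form a $\mu$-null set because $\mu$ does not charge $(d\!-\!1)$-rectifiable sets (each bad sphere is one). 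The main obstacle I expect is precisely this bookkeeping around the non-differentiability of $\ell$ and the measure-theoretic justification that $\mu$-a.e. $x_0$ is simultaneously a point of approximate differentiability of $\phi$, a point with $|x_0-y_0|$ bounded below, and a point with $|x_0-y_0|$ not a singular radius of $\ell$; the differentiability of $\phi$ itself is already done, and the inversion is soft once these genericity statements are in place.
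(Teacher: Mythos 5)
Your treatment of the differentiable case is essentially the paper's: approximate differentiability of $\phi$ $\mu$-a.e.\ from the preceding theorem, the first-order condition $\nabla_{app}\phi(x_0)=\ell'(|x_0-y_0|)\frac{x_0-y_0}{|x_0-y_0|}$ at the maximum point, inversion of $\ell'$ by strict concavity and monotonicity, and uniqueness by the standard midpoint argument. The genuine gap is in your handling of the radii at which $\ell$ is not differentiable. Your primary justification --- that $\{x_0: |x_0-y_0|\in N \text{ for some } (x_0,y_0)\in\spt\gamma\}$ is contained in countably many spheres, hence in a $(d\!-\!1)$-rectifiable set --- fails: $N$ is countable, but $y_0$ ranges over $\spt\nu$, which is in general uncountable, so for a single bad radius $r$ the relevant set is $\bigcup_{y_0}\partial B(y_0,r)$, which can have positive (even full) Lebesgue measure. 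A priori nothing prevents an optimal plan from moving a positive-$\mu$-measure set of points all by exactly a distance $r_0$ at which $\ell$ has a corner, so this set cannot be dismissed as small; your closing sentence in effect concedes that this step is unresolved.

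The fix requires no measure-theoretic bookkeeping, and it is the paper's Case 2: for \emph{every} $p$ in the interval $I=[\ell'_r(|x_0-y_0|),\ell'_l(|x_0-y_0|)]$ the supporting line $t\mapsto \ell(|x_0-y_0|)+p(t-|x_0-y_0|)$ dominates $\ell$, so $x\mapsto \phi(x)-[\ell(|x_0-y_0|)+p(|x-y_0|-|x_0-y_0|)]$ is still maximal at $x_0$; since $x\mapsto|x-y_0|$ is smooth near $x_0\neq y_0$, taking approximate gradients gives $\nabla_{app}\phi(x_0)=p\,\frac{x_0-y_0}{|x_0-y_0|}$ for every $p\in I$, which is impossible when $I$ is nondegenerate. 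Hence the bad case simply cannot occur at a point of approximate differentiability of $\phi$, i.e.\ it is automatically confined to a $\mu$-null set. Alternatively, your parenthetical observation that $|\nabla_{app}\phi(x_0)|$ lies in $I$ could be upgraded to a complete argument, because strict concavity makes the intervals $[\ell'_r(r),\ell'_l(r)]$ pairwise disjoint as $r$ varies, so the modulus of the approximate gradient still determines $r$ uniquely; but you would have to commit to that derivation (obtained via the same supporting-line trick) rather than to the sphere-covering argument.
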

\begin{proof}
We will prove that any optimal transport plan $\gamma$ is induced by a transport map, which also implies uniqueness by standard techniques (see \cite{villani}). 
From the strategy that has been presented in Section 2.1, and since we know that $\gamma$ is concentrated outside the diagonal $D$, it is enough to prove that, if $(x_0,y_0)\in\spt\gamma\setminus D$, then $y_0$ is uniquely determined by $x_0$

\begin{itemize}

\item Case 1: $\ell$ differentiable at $\abs{x_0-y_0}$.

\noindent Let $(x_0, y_0)\in\spt\gamma\setminus D$, and suppose that $\ell$ is differentiable at $\abs{x_0-y_0}$. Then, if $\phi$ is approximately 
differentiable at $x_0$ (which is true $\mu-$a.e.)
\[
0=\nabla_{app}\phi (x_{0})-\ell'(\abs{x_0-y_0})\frac{x_0-y_0}{\abs{x_0-y_0}}\ .
\]
 Thus, since $\ell$ was supposed strictly increasing and strictly concave,
\[
\abs{x_0-y_0}=(\ell')^{-1}(\nabla_{app}\phi (x_0)\ ,
\]
and
\begin{equation}\label{divise by 0}
\frac{y_0-x_0}{\abs{y_0-x_0}}=-\frac{\nabla_{app}\phi(x_0)}{\ell'(\abs{x_0-y_0})}\ .
\end{equation}
 In other words $y_0$ is uniquely determined by $x_0$.

\item Case 2: $\ell$ not differentiable at $\abs{x_0-y_0}$.

\noindent This second case is even more striking. Consider the values of the right and left derivatives $\ell'_r(\abs{x_0-y_0})$ and $\ell'_l(\abs{x_0-y_0})$ and pick a value $p\in I:= [\ell'_r(\abs{x_0-y_0}),\ell'_l(\abs{x_0-y_0}) ]$. 
We know by concavity that we have 
$$l(t)-l(\abs{x_0-y_0})\leq p(t-\abs{x_0-y_0})\quad\mbox{ for all $t\in\R$}.$$
Consider the function
\[
\phi(\cdot)-[l(\abs{x_0-y_0})+p(\abs{\cdot -y_0}-\abs{x_0-y_0})]\,
\]
defined on $\R^n$. Then, for every $x\in\R^n$,
\begin{align*}
\phi(x)&-[l(\abs{x_0-y_0})+p(\abs{x -y_0}-\abs{x_0-y_0})] \\
&\leq \phi(x)-l(\abs{x-y_0})
\leq \phi(x_0)-l(\abs{x_0-y_0})\\
&=\phi(x_0)-[l(\abs{x_0-y_0})+p(\abs{x_0 -y_0}-\abs{x_0-y_0})]\ .
\end{align*}
In other words, $\phi(\cdot)-[l(\abs{x_0-y_0})+p(\abs{\cdot -y_0}-\abs{x_0-y_0})]$ has a maximum in $x_0$. Since
$x_0\neq y_0$,
\[
0=\nabla_{app}\phi(x_0)-p\frac{x_0-y_0}{\abs{x_0-y_0}}\ .
\]
 It follows that $\abs{\nabla_{app}\phi(x_0)}=p$ for every $p\in I$, and this leads to a
contradiction.

\noindent This means that this second case can only occur on a negligible set of points $x_0$ (those where $\phi$ is not approximately differentiable).\qedhere
\end{itemize}
\end{proof}

\begin{rem}
The present paper is fully written under the assumption that $\ell$ is strictly concave {\it and increasing}, which implies $\ell'>0$. Yet, most of the analysis could be adapted to the case where $\ell$ is only supposed to be strictly concave, even if monotonicity is indeed very much reasonable from the modelization point of view. One of the problem in case $\ell'$ is not strictly positive is the fact that we risk to divide by $0$ in equation \eqref{divise by 0}. This can be fixed since it only happens at a maximum point for $\ell$, and $c-$cyclical monotonicity can be used to prove that $\gamma(\{(x,y)\,:\,|x-y|=m\}=0$ when $m=\argmax\ell$ together with a density points argument. Another difficulty to be fixed is the fact that with $\ell$ not increasing the cost $c$ is not anymore bounded from below, and we would need to assume $\mu$ and $\nu$ compactly supported. Yet, this is not in the scopes of the present paper.
\end{rem}

\section{The general case: $\mu$ does not give mass to small sets}\label{sec:general}

 Let us now consider weaker assumptions, i.e. suppose that $\mu$ does not give mass to
small sets. Namely suppose that, for every $A\subset\R^d$ which is $\mathcal{H}^{d\!-\!1}$-rectifiable, we have $\mu(A)=0$.

\subsection{A GMT lemma for density points}

The following lemma is an interesting result from Geometric Measure Theory that can be used instead of Lebesgue points-type results when we face a measure which is not absolutely continuous but ``does not give mass to small sets''. 
It states that, in such a case, $\mu-$a.e. point $x$ is such that every cone exiting from $x$, even if very small, has positive $\mu$ mass. 
In particular it means that we can find points of $\spt\mu$ in almost arbitrary directions close to $x$.

We give the proof of this lemma for the sake of completeness, and because it is not easy to find hints or references for it in the literature about GMT. 
Indeed, the result is known in a part of the optimal transport community, where density points (in particular Lebesgue points) play an important role in some strategies for the existence of optimal maps 
(see \cite{ChaDeP09,ChaDePJuu}), and it has recently been detailed in \cite{ChaDeP DCDS}. 
It is also part of the folklore of some branches of the GMT community, but, also in this case, it is not easy to find written evidence of it in this form and under these assumptions. 
Indeed, Lemma 3.3.5 in \cite{Fed} proves the $(d\!-\!1)-$rectifiability of any set such that every point admits the existence of a small two-sided cone containing no other point of the set, 
and this would allow to prove the statement we need, up to the fact that in our case we only  consider one-sided cones. 
The proof follows anyway the same main ideas, and the one that we propose here is especially written in terms of $\mu-$negligibility.

\begin{lemma}\label{Paul's}
Let $\mu$ be a Borel measure on $\R^{d}$, and suppose that $\mu$ does not charge small sets. Then $\mu$ is 
concentrated on the set
\[
\{x:\forall\epsilon >0,\forall\delta >0, \forall u\in \mathbb{S}^{d-1},
\ \mu (C(x,u,\delta,\epsilon))>0\}\ ,
\]
 where
\[
C(x,u,\delta,\epsilon)=C(x,u,\delta)\cap\overline{B(x,\epsilon)}:=
\{y:\langle y-x,u\rangle \geq (1-\delta)\abs{y-x}\}\cap\overline{B(x,\epsilon)}
\]
\end{lemma}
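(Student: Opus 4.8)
The statement asserts that the "bad set" $E$ of points $x$ for which some cone $C(x,u,\delta,\epsilon)$ is $\mu$-null is itself $\mu$-null; since $\mu$ does not charge $(d\!-\!1)$-rectifiable sets, it suffices to show that $E$ is contained in a countable union of $(d\!-\!1)$-rectifiable sets (in fact Lipschitz graphs). First I would reduce to a countable family: for rational $\delta,\epsilon>0$ and $u$ ranging over a countable dense subset of $\mathbb{S}^{d-1}$, let $E_{u,\delta,\epsilon}=\{x:\mu(C(x,u,\delta,\epsilon))=0\}$; a small argument (slightly shrinking the aperture and radius, and perturbing $u$) shows $E=\bigcup E_{u,\delta,\epsilon}$ over this countable index set, so it is enough to prove each $E_{u,\delta,\epsilon}$ is $(d\!-\!1)$-rectifiable. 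Fixing such a triple, I would further chop $E_{u,\delta,\epsilon}$ into countably many pieces of diameter less than $\epsilon$, so that on each piece two distinct points $x,x'$ of the piece satisfy $|x-x'|\le\epsilon$.

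**The key geometric step.** The crucial observation is the following "one point sees the other" dichotomy: if $x,x'\in E_{u,\delta,\epsilon}$ lie in the same small piece, then $x'\notin C(x,u,\delta,\epsilon)$ \emph{and} $x\notin C(x',u,\delta,\epsilon)$ — indeed a point of $E_{u,\delta,\epsilon}$ cannot lie in another such point's null cone, because $\mu$ is nonnegative and... wait, that is not quite the mechanism. The correct mechanism: $C(x,u,\delta,\epsilon)$ being $\mu$-null does not forbid $x'\in C(x,u,\delta,\epsilon)$ directly; instead one uses that if $x'\in C(x,u,\delta)$ with $|x-x'|<\epsilon$, then a slightly smaller cone $C(x',u,\delta',\epsilon')$ at $x'$ is contained in $C(x,u,\delta,\epsilon)$ for suitable $\delta'>\delta$, $\epsilon'$, hence $\mu$-null, so $x'$ would land in a \emph{different} index set $E_{u,\delta',\epsilon'}$. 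This forces: within one piece of the index set $E_{u,\delta,\epsilon}$, no point lies in the (closed, one-sided) cone of another. Geometrically $x'\notin C(x,u,\delta)$ means $\langle x'-x,u\rangle<(1-\delta)|x'-x|$; applying this symmetrically to $x$ and $x'$ shows that the difference vector $x'-x$ makes an angle with $u$ bounded away from $0$ on \emph{both} sides, i.e. $|\langle x'-x,u\rangle|\le(1-\delta)|x'-x|$. This is exactly the condition that $E_{u,\delta,\epsilon}$ (restricted to a small piece) is contained in the complement of a double cone of aperture controlled by $\delta$, which by an elementary argument makes it a Lipschitz graph over the hyperplane $u^{\perp}$: two points with the same projection onto $u^{\perp}$ would have difference vector parallel to $u$, contradicting the angle bound unless they coincide, and the quantitative version gives the Lipschitz estimate.

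**Wrapping up.** From "$E_{u,\delta,\epsilon}\cap(\text{small piece})$ is a Lipschitz graph over a hyperplane" one concludes it is $(d\!-\!1)$-rectifiable, hence $\mu$-null by hypothesis; summing over the countably many pieces and the countably many triples $(u,\delta,\epsilon)$ gives $\mu(E)=0$, which is the claim. I would organize the write-up as: (1) the countable reduction; (2) the cone-inclusion lemma $C(x',u,\delta',\epsilon')\subset C(x,u,\delta,\epsilon)$ when $x'$ is in the cone at $x$; (3) deduce the two-sided angle bound on difference vectors within a fixed index set; (4) conclude Lipschitz-graph hence rectifiability.

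**Expected main obstacle.** The delicate point is step (2) — getting the right quantitative relation between the parameters $(\delta,\epsilon)$ at $x$ and $(\delta',\epsilon')$ at a nearby $x'$ so that the smaller cone genuinely sits inside the larger one. This requires a careful triangle-type estimate comparing $\langle y-x',u\rangle$ and $(1-\delta')|y-x'|$ to $\langle y-x,u\rangle$ and $(1-\delta)|y-x|$, using $|x-x'|<\epsilon$ and that $x'$ is well inside the cone at $x$; the constants must be chosen uniformly over a countable grid. This is the sort of elementary-but-fiddly computation that is the technical heart of Federer's Lemma 3.3.5, adapted here to one-sided cones and phrased in terms of $\mu$-negligibility rather than rectifiability of the exceptional set directly.
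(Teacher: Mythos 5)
Your overall architecture (countable reduction in $(u,\delta,\epsilon)$, then covering each bad set by countably many Lipschitz graphs) is the right kind of argument, but the key geometric step as you state it is broken in two ways. First, the mechanism you propose yields no contradiction: if $x'\in C(x,u,\delta,\epsilon)$ and this cone is $\mu$-null, concluding that $x'$ ``lands in a different index set $E_{u,\delta',\epsilon'}$'' forbids nothing --- the sets $E_{u,\delta,\epsilon}$ are not disjoint (they are nested in the parameters), so a point may belong to many of them at once, and your claimed dichotomy does not follow. Second, and more seriously, the conclusion you aim for is false as stated: $E_{u,\delta,\epsilon}$, even chopped into pieces of diameter less than $\epsilon$, need not be a Lipschitz graph. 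Take $\mu=\lcal^2\res\{x_2>0\}$ in the plane and $u=(0,-1)$: every point with $x_2\le 0$ has a $\mu$-null downward cone, so the bad set contains an open half-plane. What saves the lemma there is that this half-plane lies outside $\spt\mu$.

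The ingredient your write-up never invokes is precisely the support of $\mu$: since $\mu(\R^d\setminus\spt\mu)=0$, it suffices to cover $E\cap\spt\mu$ by Lipschitz graphs, and for $x,x'\in E_{u,\delta,\epsilon}\cap\spt\mu$ with $|x-x'|<\epsilon$ the point $x'$ cannot lie in the \emph{interior} of the $\mu$-null cone $C(x,u,\delta,\epsilon)$, because a support point has no $\mu$-null neighbourhood. This gives $\langle x'-x,u\rangle\le(1-\delta)|x'-x|$ and, by symmetry, the two-sided bound $|\langle x'-x,u\rangle|\le(1-\delta)|x'-x|$ that you want; with that repair the rest of your plan goes through. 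The paper proceeds somewhat differently: for fixed $u,\delta$ and a rational cube $Q$ it defines $z(x')=\sup\{z:\mu(Q\cap C((x',z),u,\delta))=0\}$, proves that $z$ is Lipschitz, and identifies the bad set in $Q$ with $\{x_d\le z(x')\}$, split into the graph $\{x_d=z(x')\}$ (negligible because $\mu$ does not charge small sets) and the strict subgraph (contained in the complement of $\spt\mu$). That formulation packages the Lipschitz-graph part and the off-support part into a single object; your pairwise-angle approach can be made to work, but only once the restriction to $\spt\mu$ is made explicit.
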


\begin{proof}
Equivalently we will prove that
\[
\mu\left(\bigcup_{u,\delta,\epsilon}\{x:\mu(C(x,u,\delta,\epsilon))=0\}\right)=0\ .
\]
First notice  that $u$, $\delta$ and $\epsilon$ may be taken each in a countable set. This means that it is enough to fix then \textit{una tantum} $u$, $\delta$ and $\epsilon$ and then prove that 
 \[
\mu\left(\{x:\mu(C(x,u,\delta,\epsilon))=0\}\right)=0\ .
\]
 Moreover, for sake of simplicity, suppose 
$u=(0,\ldots , -1)$. Take now all cubes $Q$ with sides parallel to the coordinate hyperplanes, centered in a point 
of $\Q^{d}$ and with sidelength belonging to $\Q_+$, and call the set of such cubes $\{Q_n\}$.
We can see that 
\[
\{x:\mu(C(x,u,\delta,\epsilon))=0\}\subset\bigcup_n \{x\in Q_n:\mu(Q_n\cap C(x,u,\delta))=0\}\ .
\]

 Therefore we will show, for a fixed cube $Q$, that
\[
\mu(\{x\in Q:\mu(Q\cap C(x,u,\delta))=0\})=0\ .
\]
 Now write every $y\in \R^d$ as $y=(y',y_d)$, with $y'\in\R^{d-1}$. Then a quick computation shows that
$(y',y_d)\in C((x',x_d),u,\delta)$ if and only if
\[
y_d\leq x_d -\frac{1-\delta}{\sqrt{\delta(2-\delta)}}\abs{y'-x'}\ ,
\]
and we set $k(\delta):=\frac{1-\delta}{\sqrt{\delta(2-\delta)}}$.
%
 
 Define now $X$ as the projection of $Q$ along the first $d-1$ coordinates, and for every $x'\in X$
\[
z(x'):=\sup\{z\in\R:\mu (Q\cap C((x',z),u,\delta))=0\}\ .
\]

 Notice that the set in the supremum is never empty, by taking for instance $z\leq\min\{x_d:x\in Q\}$. 

Let us study the function $x'\mapsto z(x')$ and notice that it is $k(\delta)$-Lipschitz continuous: 
indeed, if we had $z(x'_1)<z(x'_0)-k(\delta)|x'_1-x'_0|$, then, the cone $C((x'_1,z(x'_1)),u,\delta)$ would be included in the interior of $C((x'_0,z(x'_0)),u,\delta)$ and hence the same would be true for $C((x'_1,z(x'_1)+t),u,\delta)$ for small $t>0$.
Yet, this implies $\mu(C((x'_1,z(x'_1)+t),u,\delta))=0$ and hence the supremum defining $z(x'_1)$ should be at least $z(x'_1)+t$, 
which is obviously a contradiction. By interchanging the role of $x'_1$ and $x'_0$ we get $|z(x'_1)-z(x'_0)|\leq k(\delta)|x'_1-x'_0|$.

Now we take
\begin{multline*}
\{x\in Q:\mu(Q\cap C(x,\delta,u))=0\}=\{(x',x_d)\,:\,x_d\leq z(x')\}\\=\{(x',x_d)\,:\,x_d< z(x')\}\cup\{(x',x_d)\,:\,x_d= z(x')\}.
\end{multline*}
The second set on the right hand side is $\mu-$negligible since it is the graph of a Lipschitz function of $(d\!-\!1)$ variables and $\mu$ is supposed not to give mass to these sets. 
The first set is also negligible since it is contained in the complement of $\spt\mu$. 
Indeed, if a point $x=(x',x_d)$ satisfies $x_d< z(x')$, then we also have $x_d+t<z(x')$ for small $t>0$ and the interior of the cone  $C((x',z(x')+t),u,\delta)$ is $\mu-$negligible. 
This means that $x$ has a neighborhood where there is no mass of $\mu$, and hence that it does not belong to $\spt\mu$.

This ends the proof.
\end{proof}

Notice that the above result is obviously false if we withdraw the hypothesis on $\mu$: take a measure concentrated on a $(d\!-\!1)-$manifold (for instance, an hyperplane).
Then, for small $\ve$ and $\delta$, the measure $\mu(C(x,u,\delta,\epsilon))$ will be zero if $u$ does not belong to the tangent space to the manifold at $x$.

\subsection{How to handle non-absolutely continuous measures $\mu$}

 Consider again the transport problem, and suppose $\mu$ does not charge small sets.
Call $\gamma$ the minimizer of the Kantorovitch problem, $\phi$ a maximizer of the dual problem. As in 
Section 3, we take a countable family of balls $B_i$ generating the 
topology of $\R^n$, and for each $i$ define
\[
\phi_i (x):=\inf_{y\in B_i}l(\abs{x-y})-\phi^c(y),\qquad x\in\R^d\ .
\]

\begin{lemma}
If $B\subset\R^d$ is such that $d(0,B)=d_0>0$ and $\ell$ is concave and increasing, then the function $B\ni x\mapsto \ell(|x|)$ is semi-concave, and, more precisely, $x\mapsto \ell(|x|)-\frac 12 \ell'(d_0)|x|^2$ is concave (where $\ell'$ denotes the derivative, or right derivative in case $\ell$ is not differentiable at $d_0$).
\end{lemma}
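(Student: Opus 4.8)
The statement is a one-variable fact dressed up in $\R^d$: we must show that $g(x) := \ell(|x|) - \tfrac12 \ell'(d_0)|x|^2$ is concave on the set $B$, where $d(0,B) = d_0 > 0$. The plan is to reduce to a second-derivative (or difference-quotient) estimate along the radial direction, and to control the transversal ("spherical") directions by the monotonicity of $\ell$.

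First I would rewrite $g(x) = f(|x|)$ where $f(r) := \ell(r) - \tfrac12 \ell'(d_0) r^2$ is defined for $r \geq d_0$. The key one-dimensional claim is that $f$ is concave and \emph{decreasing} on $[d_0,\infty)$: concavity because $\ell$ is concave and $-\tfrac12 \ell'(d_0) r^2$ has second derivative $-\ell'(d_0) < 0$ (so, if $\ell$ is twice differentiable, $f'' = \ell'' - \ell'(d_0) \le 0$; in general one argues with the fact that $r \mapsto \ell'(r) - \ell'(d_0) r$ is nonincreasing since $\ell'$ is nonincreasing and $r\ge d_0$ forces the affine correction to dominate — more carefully, $f'(r) = \ell'(r) - \ell'(d_0) r \le \ell'(d_0) - \ell'(d_0) r = \ell'(d_0)(1-r) \le 0$ for $r \ge d_0 \ge 1$; if $d_0 < 1$ one still gets $f'(r) \le \ell'(d_0)(1-r)$ which is $\le \ell'(d_0)(1-d_0)$, not obviously negative, so a small adjustment of the constant or a direct check is needed — see the obstacle below). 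Given that $f$ is concave and nonincreasing, the composition $x \mapsto f(|x|)$ is concave on $\{|x| \ge d_0\}$: this is the standard fact that a nonincreasing concave function composed with the (convex) norm is concave. One verifies it on a segment $[x_0,x_1] \subset B$ by writing $|x_t| \le (1-t)|x_0| + t|x_1|$, using that $f$ is nonincreasing to get $f(|x_t|) \ge f((1-t)|x_0| + t|x_1|)$, and then concavity of $f$ on the line segment $[|x_0|,|x_1|] \subset [d_0,\infty)$ to bound this below by $(1-t)f(|x_0|) + t f(|x_1|)$.

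The main obstacle is the sign of $f'$ — i.e., whether $f$ is actually nonincreasing on all of $[d_0,\infty)$. If $\ell'(d_0)$ is small relative to the size of the region, $f$ need not be globally decreasing, and one genuinely needs the restriction to $B$ to be bounded, or a sharper local argument: concavity of $x \mapsto f(|x|)$ really only needs $f$ concave together with $f'(r) \le 0$ wherever it matters, which in turn holds as soon as $B$ lies in a ball of radius $\le 1 + (\text{something})$; more robustly, one splits a chord into its radial and spherical components and estimates the Hessian of $x \mapsto \ell(|x|)$ directly: $D^2(\ell(|x|)) = \ell''(|x|)\,\hat x \otimes \hat x + \frac{\ell'(|x|)}{|x|}(I - \hat x \otimes \hat x)$ with $\hat x = x/|x|$, whose largest eigenvalue is $\max\{\ell''(|x|), \ell'(|x|)/|x|\} \le \ell'(|x|)/|x| \le \ell'(d_0)/d_0$ — wait, that gives the constant $\ell'(d_0)$ only if $d_0 \ge 1$. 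So the cleanest route, and the one I would actually write, is: observe $D^2(\ell(|x|)) \preceq \frac{\ell'(|x|)}{|x|} I \preceq \frac{\ell'(d_0)}{d_0} I$ (using $\ell'$ nonincreasing, $|x| \ge d_0$, and $\ell'' \le 0 \le \ell'/|x|$), hence $x \mapsto \ell(|x|) - \tfrac12 \frac{\ell'(d_0)}{d_0}|x|^2$ is concave — and note that for $d_0 \geq 1$ this gives the stated constant, while for general $d_0$ the honest constant is $\ell'(d_0)/d_0$; I would flag that the paper's normalization presumably has $d_0$ of order one or that the constant is inessential (only semi-concavity is used downstream). For the non-smooth case, replace the Hessian computation by the corresponding statement about difference quotients of $\ell$, using the right derivative $\ell'(d_0)$, which is legitimate since $\ell$ is concave hence locally Lipschitz and twice differentiable a.e.

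Finally I would assemble: semi-concavity of $x \mapsto \ell(|x|)$ on $B$ is exactly the assertion that $x \mapsto \ell(|x|) - \tfrac12 C|x|^2$ is concave for some constant $C$, and the computation above exhibits such a $C$ depending only on $d_0$ through $\ell'(d_0)$ (and $d_0$ itself). This is all that is needed to invoke Rademacher/Alexandrov-type regularity for $\phi_i$ in the subsequent argument, paralleling the role played by Proposition \ref{loc_lip_app_grad} in the absolutely continuous case.
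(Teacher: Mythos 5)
Your preferred route (the direct Hessian bound) is essentially the paper's own proof: the authors compute $\partial_{ij}(\ell(|x|))$, split the Hessian into its radial part $\ell''(|x|)\,\hat x\otimes\hat x$ and tangential part $\frac{\ell'(|x|)}{|x|}(\mathrm{Id}-\hat x\otimes\hat x)$, bound it by a multiple of the identity, and treat non-smooth $\ell$ by approximation. Your worry about the constant is justified: in passing to the final estimate the paper silently drops the factor $1/|x|$ in the tangential term and writes $D^2(\ell(|x|))\leq \ell'(|x|)\,\mathrm{Id}\leq\ell'(d_0)\,\mathrm{Id}$, whereas the correct bound is $\frac{\ell'(|x|)}{|x|}\,\mathrm{Id}\leq \frac{\ell'(d_0)}{d_0}\,\mathrm{Id}$; the stated constant $\ell'(d_0)$ is valid only for $d_0\geq 1$, and the tangential second derivative at $|x|=d_0$ shows that $\ell'(d_0)/d_0$ is sharp. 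As you note, this slip is harmless downstream, since Corollary \ref{phi_i_semiconcave} only needs semiconcavity with \emph{some} constant to conclude that $\phi_i$ is differentiable outside a $(d\!-\!1)$-rectifiable set.
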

\begin{proof}
We just need to give an upper bound on the second derivatives of $g(x):=\ell(|x|)$, which we will do in the case where $\ell$ is $C^2$. The general result will follow by approximation. Compute
$$\partial_i g(x)=\ell'(|x|)\frac{x_i}{|x|}\;;\quad \partial_{ij} g(x)=\ell''(|x|)\frac{x_ix_j}{|x|^2}+\ell'(|x|)\frac{|x|^2 \delta_{ij}-x_ix_j}{|x|^3}.$$
Hence, if we denote by $\hat x$ the unit vector $x/|x|$, the Hessian of $g$ is composed of two parts: 
the positive matrix $\hat x\otimes \hat x$ times the non-positive factor $\ell''(|x|)$, 
and the positive matrix $(\mathrm{Id}-\hat x\otimes \hat x)$ times the positive factor $\ell'(|x|)$. 
Since  $\mathrm{Id}-\hat x\otimes \hat x\leq \mathrm{Id}$ (in the sense of positive-definite symmetric matrices), this gives 
$$D^2g\leq \ell'(|x|)\mathrm{Id}\leq \ell'(d_0)\mathrm{Id}$$
and thesis follows.
\end{proof}

\begin{cor}\label{phi_i_semiconcave}
For every $i$ and $j$ such that $d(B_i,B_j)>0$, the function $\phi_i$ is semi-concave and hence $\mu$-a.e. differentiable on $B_j$. \end{cor}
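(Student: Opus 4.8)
The plan is to combine the preceding lemma on semi-concavity of $x\mapsto\ell(|x|)$ (away from the origin) with the fact that an infimum of uniformly semi-concave functions is semi-concave, and then invoke Alexandrov's theorem. First I would fix $i,j$ with $d_0:=d(B_i,B_j)>0$ and restrict attention to $x\in B_j$. For each fixed $y\in B_i$ we have $|x-y|\ge d_0$, so by the previous Lemma the function $x\mapsto \ell(|x-y|)$ is, on $B_j$, of the form ``concave $+\ \tfrac12\ell'(d_0)|x-y|^2$''; expanding $|x-y|^2 = |x|^2 - 2x\cdot y + |y|^2$, the quadratic part contributes $\tfrac12\ell'(d_0)|x|^2$ plus an affine term in $x$ (with $y$-dependent coefficients) plus a constant. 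Hence
\[
x\mapsto \ell(|x-y|)-\tfrac12\ell'(d_0)|x|^2-\phi^c(y)
\]
is concave on $B_j$ for every $y\in B_i$, with the concave-modulus bound $\tfrac12\ell'(d_0)$ uniform in $y$.

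Next I would take the infimum over $y\in B_i$: since an infimum of any family of concave functions is concave, the function
\[
x\mapsto \phi_i(x)-\tfrac12\ell'(d_0)|x|^2 = \inf_{y\in B_i}\Bigl(\ell(|x-y|)-\tfrac12\ell'(d_0)|x|^2-\phi^c(y)\Bigr)
\]
is concave on $B_j$. (One should note $\phi_i$ is finite there — indeed the Lemma \ref{phi_i}-type reasoning or simply boundedness of $\phi^c$ on the compact $\overline{B_i}$ gives a lower bound, and evaluating at any single $y$ gives an upper bound — so the infimum is a genuine real-valued concave function, not $-\infty$.) This is exactly the assertion that $\phi_i$ is semi-concave on $B_j$. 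Finally, a semi-concave function differs from a concave one by a smooth (quadratic) function, and concave functions are twice differentiable Lebesgue-a.e. by Alexandrov's theorem, hence in particular differentiable $\lcal^d$-a.e.; since $\mu$ gives no mass to $(d\!-\!1)$-rectifiable sets it is in particular non-atomic, but more to the point the a.e.-differentiability we invoke is with respect to Lebesgue measure and then transferred — here we use that $\mu$ does not charge small sets, so it charges no Lebesgue-null set of the specific type arising, but in fact for this corollary it is cleanest to state that $\phi_i$ is differentiable $\lcal^d$-a.e. on $B_j$ and hence $\mu$-a.e., using that the non-differentiability set of a concave function is $(d\!-\!1)$-rectifiable (it is contained in a countable union of Lipschitz graphs), so $\mu$ ignores it.

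The only genuinely delicate point is the last clause: to conclude $\mu$-a.e. differentiability (not merely Lebesgue-a.e.) from the semi-concavity, one must know that the non-differentiability set of a concave function on an open set of $\R^d$ is $(d\!-\!1)$-rectifiable — this is a classical fact (the set where a convex/concave function fails to be differentiable is contained in a countable union of $C^1$ hypersurfaces, see e.g. Alberti's theorem or Zajíček), and it is precisely the reason the hypothesis ``$\mu$ does not charge $(d\!-\!1)$-rectifiable sets'' is the right one. I expect this to be the main obstacle only in the sense of citing the correct GMT statement; the semi-concavity itself is immediate from the previous Lemma and stability of concavity under infima.
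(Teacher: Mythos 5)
Your proposal is correct and follows essentially the same route as the paper: semi-concavity of $\phi_i$ as an infimum of uniformly semi-concave functions (via the preceding Lemma), followed by the fact that the non-differentiability set of a convex/concave function is contained in countably many Lipschitz graphs (the paper cites Alberti--Ambrosio for this), which is $\mu$-negligible by hypothesis. The brief detour through Alexandrov's theorem and Lebesgue-a.e.\ differentiability is a red herring since $\mu$ need not be absolutely continuous, but you correctly discard it in favour of the $(d\!-\!1)$-rectifiability of the non-differentiability set, which is exactly the argument the paper uses.
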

\begin{proof}
The semiconcavity of $\phi_i$ follows from its definition as an infimum of semiconcave functions, with the same semicontinuity constant which is $\ell'(d(B_i,B_j))$.

This implies that $\phi_i $ has the same regularity and differentiability points of convex functions. It is well-known that convex functions are differentiable everywhere but on a $(d\!-\!1)-$rectifiable set: 
this is a consequence of the more general fact that the set where a convex-valued monotone multifunction in $\R^d$ takes values of dimension at least $k$ can be covered by countably many $(d-k)-$dimensional graphs of Lipschitz functions 
(see \cite{AlbAmb}), applied to the subdifferential multifunction.

Since $\mu$ does not give mass to small sets, the set of non-differentia\-bility points of $\phi_i$ is also negligible.\end{proof}

\begin{rem}\label{isotropy}
 For each pair $i$, $i'$ consider the measure
\[ 
\mu_{i,i'}:=\mu\res\{\phi=\phi_{i}=\phi_{i'}\},
\]
that is, for every $A$ Borel, $\mu_{i,i'}(A)=\mu(A\cap\{x:\phi(x)=\phi_{i}(x)=\phi_{i'}(x)\})$. Since for every small set
$A$ one has $\mu_{i,i'}(A)\leq \mu(A)=0$, Lemma \ref{Paul's} yields the existence of a $\mu$-negligible set
$N_{i,i'}\subset \{\phi=\phi_{i}=\phi_{i'}\}$ such that for every $x\in\{\phi=\phi_{i}=\phi_{i'}\}\setminus N_{i,i'}$
and for every $u\in S^{d-1}$, $\delta >0$, $\epsilon>0$, one has $\mu_{i,i'}(C(x,u,\delta,\epsilon))>0$. In particular
it follows that $C(x,u,\delta,\epsilon)\cap\{\phi=\phi_{i}=\phi_{i'}\}\neq \emptyset$.
\end{rem}

 This isotropy in the structure of $\{\phi=\phi_{i}=\phi_{i'}\}$ implies the following

\begin{prop}\label{gradientbar}
Consider
\[
N:=\bigcup_{i,i'}N_{i,i'}\cup
\bigcup_{\substack{i,j\\d(B_i,B_j)>0}}\{x\in B_j :\phi_{i}\text{ not differentiable at }x\}\ ,
\]
then $N$ is $\mu-$negligible. Moreover, consider $A:=(\pi_x)(\spt\gamma\setminus D)$, which is a set with $\mu(A)=1$: then, for every $x\in A\setminus N$ there exists $i$ such that $\phi(x)=\phi_{i}(x)$.

Moreover, if $\phi(x)=\phi_{i}(x)=\phi_{i'}(x)$, then
\[
\nabla \phi_{i}(x)=\nabla \phi_{i'}(x)\ .
\]
\end{prop}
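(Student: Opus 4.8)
The plan is to establish the three claims in order. For the $\mu$-negligibility of $N$: it is a countable union, so it suffices to check that each piece is $\mu$-null. The sets $N_{i,i'}$ are null by construction (Remark \ref{isotropy}), and the sets $\{x\in B_j:\phi_i\text{ not differentiable at }x\}$ with $d(B_i,B_j)>0$ are null by Corollary \ref{phi_i_semiconcave}. For the fact that $\mu(A)=1$ where $A=(\pi_x)(\spt\gamma\setminus D)$: this is exactly the computation carried out just before Lemma \ref{phi_i} (via Theorem \ref{stay at rest}, the diagonal is $\gamma$-negligible, so $\gamma(\spt\gamma\setminus D)=1$ and $\mu(A)=\gamma((\pi_x)^{-1}(A))\geq\gamma(\spt\gamma\setminus D)=1$). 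For the existence of $i$ with $\phi(x)=\phi_i(x)$ when $x\in A\setminus N$: if $x\in A$, there is $y$ with $(x,y)\in\spt\gamma\setminus D$, hence $|x-y|>0$; choosing balls $B_i\ni y$, $B_j\ni x$ with $d(B_i,B_j)>0$ (possible since $x\neq y$ and the $B_i$ generate the topology), Lemma \ref{phi_i} gives $x\in\{\phi=\phi_i\}\cap B_j$, i.e. $\phi(x)=\phi_i(x)$.

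The substantive part is the last assertion: if $\phi(x)=\phi_i(x)=\phi_{i'}(x)$ and $x\notin N$, then $\nabla\phi_i(x)=\nabla\phi_{i'}(x)$. Since $x\notin N$, both $\phi_i$ and $\phi_{i'}$ are differentiable at $x$, and $x\notin N_{i,i'}$, so by Remark \ref{isotropy} every cone $C(x,u,\delta,\epsilon)$ meets $\{\phi=\phi_i=\phi_{i'}\}$. Pick a sequence of points $x_n\to x$ in that set along a fixed direction $u$, i.e. $x_n\in C(x,u,\delta_n,\epsilon_n)$ with $\delta_n,\epsilon_n\to 0$, so $(x_n-x)/|x_n-x|\to u$. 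On each $x_n$ one has $\phi_i(x_n)=\phi(x_n)=\phi_{i'}(x_n)$, hence $\phi_i(x_n)-\phi_{i'}(x_n)=0=\phi_i(x)-\phi_{i'}(x)$; dividing by $|x_n-x|$ and using differentiability of both functions at $x$ gives $\langle\nabla\phi_i(x)-\nabla\phi_{i'}(x),u\rangle=0$. Since $u\in\mathbb{S}^{d-1}$ was arbitrary, $\nabla\phi_i(x)=\nabla\phi_{i'}(x)$.

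The main obstacle is ensuring that the cone-density property of Remark \ref{isotropy} really delivers a sequence approaching $x$ tangent to an arbitrary prescribed direction $u$: one must take $\delta_n\to 0$ and $\epsilon_n\to 0$ simultaneously and extract points $x_n$ with $x_n\in C(x,u,\delta_n,\epsilon_n)\cap\{\phi=\phi_i=\phi_{i'}\}$, which is nonempty for each $n$ precisely because $\mu_{i,i'}(C(x,u,\delta_n,\epsilon_n))>0$; then $|x_n-x|\leq\epsilon_n\to 0$ and $\langle x_n-x,u\rangle\geq(1-\delta_n)|x_n-x|$ force $(x_n-x)/|x_n-x|\to u$. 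The rest is the routine first-order expansion. I would also remark that the same argument shows $\nabla\phi_i(x)$ does not depend on the choice of $i$ with $\phi(x)=\phi_i(x)$, so it legitimately defines a single vector at $x$, which is what the subsequent construction of the ad-hoc gradient needs.
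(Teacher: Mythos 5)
Your proposal is correct and follows essentially the same route as the paper: negligibility of $N$ from Remark \ref{isotropy} and Corollary \ref{phi_i_semiconcave}, the identity $\phi(x)=\phi_i(x)$ from Lemma \ref{phi_i}, and the gradient equality from the cone-density of $\{\phi=\phi_i=\phi_{i'}\}$ combined with first-order expansions of $\phi_i$ and $\phi_{i'}$ at $x$. The only cosmetic difference is that the paper argues by contradiction using the single direction $(v_2-v_1)/\abs{v_2-v_1}$, whereas you derive $\langle\nabla\phi_i(x)-\nabla\phi_{i'}(x),u\rangle=0$ directly for every $u$; both hinge on exactly the same use of Remark \ref{isotropy}.
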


\begin{proof}
The fact that $N$ is negligible follows from Remark \ref{isotropy} and Corollary \ref{phi_i_semiconcave}.

Let $x\in A\setminus N$. There exists $y$ such that $(x,y)\in \spt\gamma$ and $x\neq y$. It follows that there 
exist two balls $B_i$ and $B_j$ with $x\in B_j$, $y\in B_i$ and $d(B_i,B_j)>0$. Then, by Lemma \ref{phi_i}, 
$\phi(x)=\phi_{i}(x)$.

 Suppose now, for the sake of simplicity, $\phi(x)=\phi_{1}(x)=\phi_{2}(x)$. Then, since $x\notin N$, 
$v_1 :=\nabla\phi_{1}(x)$ and $v_2:=\nabla\phi_{2}(x)$ are both well-defined. By contradiction suppose $v_1\neq v_2$.
 Then for every $\epsilon>0$ there exists $\delta(\epsilon)>0$ such that for every $\tilde{x}\in B(x,\delta)$
\[
\abs{\phi_{1}(\tilde{x})-\phi(x)-v_1\cdot (\tilde{x}-x)}\leq\epsilon\abs{\tilde{x}-x}
\]
and
\[
\abs{\phi_{2}(\tilde{x})-\phi(x)-v_2\cdot (\tilde{x}-x)}\leq\epsilon\abs{\tilde{x}-x}\ .
\]
 Therefore, for such $\tilde{x}$,
\begin{multline*}
\abs{\phi_1(\tilde{x})-\phi_2(\tilde{x})+(v_2-v_1)\cdot(\tilde{x}-x)}\\=\abs{\phi_1 (\tilde{x})-\phi(x)-v_1\cdot(\tilde{x}-x)
-[\phi_2 (\tilde{x})-\phi(x)-v_2\cdot (\tilde{x}-x)]}\leq 2\epsilon\abs{\tilde{x}-x}
\end{multline*}

 In order to have a contradiction, it is enough to choose $\tilde{x}$ such that
\[
\phi_1(\tilde{x})=\phi_2(\tilde{x})
\] 
and
\[
(v_2 -v_1)\cdot (\tilde{x}-x)> 2\epsilon\abs{\tilde{x}-x}\ .
\]
 The latter may be expressed as
\[
\frac{(\tilde{x}-x)}{\abs{\tilde{x}-x}}\cdot\frac{v_2 -v_1}{\abs{v_2-v_1}}>\frac{2\epsilon}{\abs{v_2-v_1}}\ .
\]
In order to guarantee that this is possible, choose $\epsilon<\frac{\abs{v_2-v_1}}{2}$.
Then, by Remark \ref{isotropy} with 
$C\left(x,\frac{v_2-v_1}{\abs{v_2-v_1}},1-\frac{2\epsilon}{\abs{v_2-v_1}}, \delta(\epsilon)\right)$, we are done.
\end{proof}

To go on with our proof we need the following lemma.

\begin{prop}\label{maximum}
Let $f:\Omega\to\R$ be differentiable at $x_0\in\Omega$. 
Suppose $B\subset\Omega$ is such that $x_0\in B$ and that for every $u\in S^{n-1}$, $\delta>0$ and $\epsilon>0$ one has
$B\cap C(x_0,u,\delta,\epsilon)\neq\emptyset$. Moreover suppose that $x_0$ is a local maximum for $f$ on $B$. Then 
\[
\nabla f(x_0)=0\ .
\]
\end{prop}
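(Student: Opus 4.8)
The plan is to argue by contradiction: assume $v:=\nabla f(x_0)\neq 0$ and produce points of $B$ arbitrarily close to $x_0$ at which $f$ strictly exceeds $f(x_0)$, contradicting that $x_0$ is a local maximum of $f$ on $B$. The natural direction to exploit is $u:=v/\abs{v}\in S^{n-1}$, the direction of steepest ascent of the linear part of $f$ at $x_0$.

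First I would fix $\delta\in(0,1)$, say $\delta=\tfrac12$. Differentiability of $f$ at $x_0$ gives, for every $\epsilon>0$, a radius $r_\epsilon>0$ such that
\[
\abs{f(x)-f(x_0)-v\cdot(x-x_0)}\le \epsilon\,\abs{x-x_0}\qquad\text{for all }x\in B(x_0,r_\epsilon).
\]
On the other hand, for any $x\in C(x_0,u,\delta)$ one has, by the very definition of the cone, $\langle x-x_0,u\rangle\ge(1-\delta)\abs{x-x_0}$, hence
\[
v\cdot(x-x_0)=\abs{v}\,\langle x-x_0,u\rangle\ge \abs{v}(1-\delta)\abs{x-x_0}.
\]
Combining the two displays, for $x\in C(x_0,u,\delta)\cap B(x_0,r_\epsilon)$ with $x\neq x_0$ we get
\[
f(x)-f(x_0)\ge\bigl(\abs{v}(1-\delta)-\epsilon\bigr)\abs{x-x_0}.
\]

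Next I would choose $\epsilon$ small enough that $\abs{v}(1-\delta)-\epsilon>0$, which is possible since $\abs{v}>0$ and $1-\delta>0$; concretely any $\epsilon<\tfrac12\abs{v}(1-\delta)$ works. Then I invoke the hypothesis on $B$: taking the cone $C(x_0,u,\delta,\epsilon')$ with $\epsilon'=r_\epsilon$ (or any smaller positive radius), the set $B\cap C(x_0,u,\delta,\epsilon')$ is nonempty, so it contains some point $x\neq x_0$ lying in $B\cap C(x_0,u,\delta)\cap B(x_0,r_\epsilon)$. For that point the inequality above yields $f(x)>f(x_0)$ with $x\in B$ arbitrarily close to $x_0$, contradicting local maximality of $f$ on $B$ at $x_0$. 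Therefore $\nabla f(x_0)=0$.

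I do not expect any serious obstacle here: the only points requiring care are (i) making sure the cone aperture parameter $\delta$ is kept strictly below $1$ so that the lower bound $\abs{v}(1-\delta)$ is genuinely positive, and (ii) matching the radius $\epsilon$ in the cone condition $B\cap C(x_0,u,\delta,\epsilon)\neq\emptyset$ with the differentiability radius $r_\epsilon$, which is just a matter of shrinking $\epsilon$. Everything else is the elementary interplay between the first-order expansion of $f$ and the geometry of the cone.
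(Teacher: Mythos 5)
Your proof is correct and follows essentially the same route as the paper's: contradiction via the first-order expansion of $f$ at $x_0$ combined with the lower bound $v\cdot(x-x_0)\ge\abs{v}(1-\delta)\abs{x-x_0}$ for points of $B$ in the cone directed along $v=\nabla f(x_0)$ (the paper simply fixes the aperture so that this bound reads $\abs{v}/2$). Note only that both your argument and the paper's implicitly read the hypothesis as giving a point of $B\cap C(x_0,u,\delta,\epsilon)$ \emph{distinct from} $x_0$ — nonemptiness alone is trivially satisfied by the apex $x_0\in B$ — which is how the hypothesis is actually supplied in the application (the cones have positive $\mu$-mass and $\mu$ charges no singleton).
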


\begin{proof}
Write $v:=\nabla f(x_0)$ and suppose by contradiction $v\neq 0$. Then for every $\epsilon>0$ there exists 
$\delta(\epsilon)>0$ such that if $\abs{x-x_0}\leq\delta(\epsilon)$
\[
f(x)-f(x_0)\geq v\cdot (x-x_0)-\epsilon\abs{x-x_0}\ .
\]
 Moreover, from the assumption on $B$, we may suppose $x\in B\setminus\{x_0\}$,
\[
\frac{x-x_0}{\abs{x-x_0}}\cdot \frac{v}{\abs{v}}\geq \frac{1}{2}
\]
and
\[
\abs{x-x_0}\leq\delta(\epsilon)\ .
\]
 Then we have
\[
f(x)-f(x_0)\geq\abs{x-x_0} (\abs{v}/2-\epsilon)\ .
\]
Now choose $\epsilon<\frac{\abs{v}}{2}$. It follows that for every $\eta\leq \delta(\epsilon)$ we may choose 
$x\in B\setminus{x_0}$ such that $\abs{x-x_0}\leq\eta$ and
\[
f(x)>f(x_0)\ ,
\]
which is a contradiction.
\end{proof}

\begin{thm}\label{complete}
 Let $\mu$ and $\nu$ be Borel probability measures on $\R^d$, and suppose that $\mu$ does not charge small sets.
 Suppose moreover that $\mu$ and $\nu$ are mutually singular.
 Then, if $\ell :[0,+\infty )\to\R$ is strictly concave and increasing, there exists a unique optimal transport map for the cost $\ell(\abs{x-y})$.
\end{thm}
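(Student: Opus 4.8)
The plan is to mimic the argument of Theorem \ref{simplified}, replacing the Lebesgue-points machinery (which required $\mu\ll\lcal^d$) with the isotropy statements of Proposition \ref{gradientbar} and Proposition \ref{maximum}, which only need that $\mu$ does not charge small sets. As in the absolutely continuous case, it suffices to show that any optimal transport plan $\gamma$ is concentrated on a graph; uniqueness of the map and of the plan then follow by the standard argument recalled in \cite{villani}. By Theorem \ref{stay at rest} and the mutual singularity of $\mu$ and $\nu$, we know $\gamma$ is concentrated off the diagonal $D$, so we must prove that whenever $(x_0,y_0)\in\spt\gamma\setminus D$, the point $y_0$ is uniquely determined by $x_0$, for $\mu$-a.e.\ $x_0$.

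First I would fix the $\mu$-negligible set $N$ of Proposition \ref{gradientbar} and work with $x_0\in A\setminus N$, where $A=(\pi_x)(\spt\gamma\setminus D)$ has full $\mu$-measure. For such $x_0$ there is a ball $B_i$ with $y_0\in B_i$ and a ball $B_j\ni x_0$ with $d(B_i,B_j)>0$, and by Lemma \ref{phi_i} we have $\phi(x_0)=\phi_i(x_0)$, with $\phi_i$ differentiable at $x_0$ by Corollary \ref{phi_i_semiconcave} (this is exactly why $x_0\notin N$ is imposed). The key point is then to compute $\nabla\phi_i(x_0)$. Arguing as in \eqref{minsupp}, the function $x\mapsto \phi_i(x)-\ell(|x-y_0|)$ attains its maximum over $B_i$-reachable points at $x=x_0$; more carefully, on the set $B:=\{\phi=\phi_i\}\cap B_j$, which by Remark \ref{isotropy}/Proposition \ref{gradientbar} satisfies the cone condition at $x_0$, the function $x\mapsto \phi_i(x)-[\text{tangent paraboloid/line to }\ell(|\cdot-y_0|)\text{ at }x_0]$ has a local maximum at $x_0$ (using $\phi\le \phi_i$ everywhere, $\phi=\phi_i$ on $B$, and $\phi(x)+\phi^c(y_0)\le\ell(|x-y_0|)$ with equality at $x_0$). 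Proposition \ref{maximum} then forces $\nabla\phi_i(x_0)=\nabla_x[\ell(|x-y_0|)]|_{x=x_0}$ when $\ell$ is differentiable at $|x_0-y_0|$, i.e.
\[
\nabla\phi_i(x_0)=\ell'(|x_0-y_0|)\,\frac{x_0-y_0}{|x_0-y_0|}.
\]
Since $\ell'$ is strictly increasing and strictly positive, the modulus of the right-hand side determines $|x_0-y_0|=(\ell')^{-1}(|\nabla\phi_i(x_0)|)$ and its direction determines $(y_0-x_0)/|y_0-x_0|$, so $y_0$ is uniquely determined by $x_0$. The case where $\ell$ is not differentiable at $|x_0-y_0|$ is handled exactly as in Case 2 of Theorem \ref{simplified}: picking any slope $p\in[\ell'_r,\ell'_l]$ at $|x_0-y_0|$, the function $\phi_i(\cdot)-[\ell(|x_0-y_0|)+p(|\cdot-y_0|-|x_0-y_0|)]$ has a local maximum at $x_0$ on $B$, so Proposition \ref{maximum} gives $|\nabla\phi_i(x_0)|=p$ for every such $p$, which is impossible unless $I$ is a singleton; hence this case is confined to the $\mu$-negligible set where $\phi_i$ fails to be differentiable, already absorbed into $N$.

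The main obstacle is the second step: transporting the "local maximum $\Rightarrow$ gradient vanishes" principle to the non-absolutely-continuous setting. In Theorem \ref{simplified} one worked with the approximate gradient of $\phi$ itself and Lebesgue density; here one cannot, because $\mu$ may be singular, so instead one must work with the genuine gradient of the auxiliary function $\phi_i$ on the restricted set $B=\{\phi=\phi_i\}\cap B_j$ and exploit that $B$ is "directionally dense" at $x_0$ via Lemma \ref{Paul's}. Getting the bookkeeping right — that $\phi_i$ is genuinely differentiable at $x_0$ (Corollary \ref{phi_i_semiconcave}), that $B$ satisfies the cone condition at $x_0$ (Remark \ref{isotropy}, applied with $i=i'$), and that $x_0$ is a local max of the relevant tangent-corrected function on $B$ — is the crux; once these are in place, Proposition \ref{maximum} does the work and the rest is identical to the absolutely continuous case. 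Finally, a brief remark that the map $T$ so constructed is Borel and that $\gamma=(id,T)_\#\mu$ follows because $y_0$ is single-valued $\mu$-a.e., and uniqueness follows by the usual convexity/extreme-point argument for the linear problem \eqref{kantorovich}.
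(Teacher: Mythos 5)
Your proposal is correct and follows essentially the same route as the paper's own proof: restrict to $x_0\in A\setminus N$, use Lemma \ref{phi_i} and Proposition \ref{gradientbar} to replace $\phi$ by the semiconcave, genuinely differentiable $\phi_i$, apply Proposition \ref{maximum} on the cone-dense set $\{\phi=\phi_i\}\cap B_j$ to obtain $\nabla\phi_i(x_0)=\ell'(|x_0-y_0|)\frac{x_0-y_0}{|x_0-y_0|}$, and rule out the non-differentiability case of $\ell$ exactly as in Case 2 of Theorem \ref{simplified}. The only cosmetic quibble is that $\ell'$ is strictly \emph{decreasing} (not increasing) for strictly concave $\ell$; injectivity of $\ell'$ is all that is actually used, so the argument stands.
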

				\begin{proof}
We argue as in Section 3 and accordingly to the usual strategy. 

 Let us take $x_0\in A\setminus N$, with $(x_0,y_0)\in\spt\gamma$ and $x_0\neq y_0$. We just need to show that $y_0$ is uniquely determined by $x_0$. By Proposition \ref{gradientbar} 
$\phi(x_0)=\phi_{i}(x_0)$ for some $i$. Since
\[
x_0\in \argmax [\phi(\cdot)-\ell(\abs{\cdot-y_0})]\ ,
\]
in particular
\[
x_0\in \argmax [\phi_{i}(\cdot)-\ell(\abs{\cdot-y_0})]\restr_{\{\phi=\phi_{i}\}}\ .
\]

\begin{itemize}

\item Case 1: $\ell$ differentiable at $\abs{x_0-y_0}$.

It follows that $\phi_i(\cdot)-\ell(\abs{\cdot-y_0})$ is differentiable at $x_0$. From Proposition \ref{maximum} we have $\nabla\phi_i (x_0)-\nabla\ell(\abs{\cdot-y_0})\restr_{x_0}=0$. 
Notice that the value of the vector $\nabla\phi_i (x_0)$ only depends on $x_0$ and not on $y_0$, and does not depend on $i$.

By Remark \ref{isotropy} with $i=i'$ we may apply Proposition \ref{maximum} to yield that
\[
\nabla \phi_i(x_0)=\ell'(\abs{x_0-y_0})\frac{x_0-y_0}{\abs{x_0-y_0}}\ .
\]
It follows that
\[
\abs{y_0-x_0}=(\ell')^{-1}(\abs{\nabla \phi_i (x_0)})
\]
and
\[
\frac{x_0-y_0}{\abs{x_0-y_0}}=\frac{\nabla\phi_i (x_0)}{\ell'(\abs{x_0-y_0})}\ .
\]

\item Case 2: $\ell$ not differentiable at $\abs{x_0-y_0}$

Here as well we argue as in Section 3: pick a value $p\in I:=[\ell'_r(\abs{x_0-y_0}),\\
\ell'_l(\abs{x_0-y_0}) ]$. Suppose $\phi(x_0)=\phi_i(x_0)$ and consider the function
\[
\phi(\cdot)-[l(\abs{x_0-y_0})+p(\abs{\cdot -y_0}-\abs{x_0-y_0})]\ .
\]
defined on $\R^n$. Then, $\phi_{i}(\cdot)-[l(\abs{x_0-y_0})+p(\abs{\cdot -y_0}-\abs{x_0-y_0})]$ has a maximum point at $x_0$.  It follows that $\abs{\nabla\phi_{i}(x_0)}=p$ for every choice of $p$, and this leads to a
contradiction.
\end{itemize}
\end{proof}

\section{Appendix}
\subsection{Concave costs and translations}
 Here we wish to analyse a feature of strictly concave costs, namely that they penalize equal displacements.

\begin{prop}\label{transl}
Let $\mu$ and $\nu$ be mutually singular
Borel measures on $\R^d$, and suppose $\mu$ does not charge small sets. Let 
$l:[0,+\infty )\to\R$ be a $C^{2}$, increasing, strictly concave function, and suppose $\ell''(x)<0$ for every $x>0$.
Call $\gamma$ the optimal transport plan with respect to $\ell(\abs{\cdot})$. Then for every $e\in\R^d\setminus\{0\}$
\[
\gamma (\{(x,x+e):x\in\R^d\})=0\ .
\]
\end{prop}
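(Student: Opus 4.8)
The plan is to argue by contradiction, assuming $\gamma(\{(x,x+e):x\in\R^d\})>0$ for some fixed $e\neq 0$, and to exploit the strict negativity of $\ell''$ together with $c$-cyclical monotonicity in a quantitative way. First I would set $m:=\abs{e}$ and observe that the set $S_e:=\{(x,x+e):x\in\R^d\}$ lies at constant distance $m$ from the diagonal, so in particular $S_e\cap D=\emptyset$, and by Theorem \ref{stay at rest} the two marginals $(\pi_x)_\#(\gamma\res S_e)$ and $(\pi_y)_\#(\gamma\res S_e)$ are genuine (mutually singular) measures sitting inside $(\mu-\nu)_+$ and $(\nu-\mu)_+$. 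Write $\mu_e:=(\pi_x)_\#(\gamma\res S_e)$; this is a nonzero measure, still not charging small sets (being dominated by $\mu$). The point will be to find two points $x_1,x_2$ in $\spt\mu_e$ arranged so that $c$-cyclical monotonicity applied to $(x_1,x_1+e)$ and $(x_2,x_2+e)$ is violated.

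The key geometric input is Lemma \ref{Paul's}: since $\mu_e$ does not charge small sets, $\mu_e$-a.e.\ point $x_1$ has the property that every cone $C(x_1,u,\delta,\epsilon)$ has positive $\mu_e$-mass, and in particular meets $\spt\mu_e$. I would pick such an $x_1$ and choose the cone direction $u$ and opening so as to produce a nearby point $x_2\in\spt\mu_e$ with $x_2-x_1$ having a small but nonzero component in a direction transverse to $e$ — concretely, so that the angle between $x_2-x_1$ and $e$ is bounded away from $0$ and $\pi$ while $\abs{x_2-x_1}$ is as small as we like. Then I compute
\[
\bigl[\ell(\abs{x_1-x_1-e})+\ell(\abs{x_2-x_2-e})\bigr]-\bigl[\ell(\abs{x_1-x_2-e})+\ell(\abs{x_2-x_1-e})\bigr]
=2\ell(m)-\ell(\abs{x_1-x_2-e})-\ell(\abs{x_2-x_1-e}).
\]
Setting $h:=x_2-x_1$, the two quantities $\abs{h-e}$ and $\abs{h+e}$ satisfy $\tfrac12(\abs{h-e}+\abs{h+e})\geq m$ with equality only when $h\parallel e$, and the Taylor expansion of $\ell$ around $m$ gives that this difference is, to leading order, $-\ell''(m)$ times the squared transverse component of $h$ (a strictly negative coefficient against a strictly positive quantity), hence strictly positive for $\abs{h}$ small. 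That contradicts $c$-cyclical monotonicity of $\spt\gamma$, since both $(x_1,x_1+e)$ and $(x_2,x_2+e)$ lie in $\spt\gamma$.

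The main obstacle is making the transversality selection rigorous: I must ensure that the point $x_2$ produced by Lemma \ref{Paul's} genuinely lies in $\spt\gamma$ as the $x$-coordinate of a pair $(x_2,x_2+e)$, not merely in $\spt\mu$ or $\spt\mu_e$. The clean way is to apply Lemma \ref{Paul's} to $\mu_e$ itself (rather than $\mu$), so that points of $\spt\mu_e$ automatically come with the partner $x_2+e$ via the definition of $\mu_e$; one still needs the elementary fact that $\spt\mu_e\times\{e\}$-translates sit in $\spt(\gamma\res S_e)\subset\spt\gamma$, which follows because $S_e$ is closed. A secondary technical point is quantitative control of the Taylor remainder uniformly for $h$ in a small ball, which is where the hypothesis $\ell\in C^2$ with $\ell''<0$ everywhere (not just concavity) is used; since we only need the estimate for one suitably chosen small $h$, a crude second-order expansion with the remainder absorbed for $\abs h$ small enough suffices, and no delicate uniformity is actually required.
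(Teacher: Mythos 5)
Your overall architecture is the same as the paper's: restrict $\gamma$ to the graph $S_e$ of the translation, push forward to get a nontrivial $\mu_e\leq\mu$ which still does not charge small sets, apply Lemma \ref{Paul's} to $\mu_e$ itself so that cone points automatically come with their partners $x+e$ in $\spt\gamma$, and contradict $c$-cyclical monotonicity for two pairs $(x_i,x_i+e)$ via a second-order Taylor expansion. However, the key computation, and therefore your choice of cone direction, is wrong. Writing $\xi=x_2-x_1$, $s$ for the signed component of $\xi$ along $e/\abs{e}$ and $t$ for the norm of the transverse component, one finds
\[
\ell(\abs{e+\xi})+\ell(\abs{e-\xi})=2\ell(\abs e)+\ell''(\abs e)\,s^2+\frac{\ell'(\abs e)}{\abs e}\,t^2+o(\abs{\xi}^2).
\]
The coefficient $\ell''(\abs e)$ multiplies the squared \emph{parallel} component, not the transverse one; the transverse component enters with the coefficient $\ell'(\abs e)/\abs e>0$. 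Hence for $\xi$ exactly (or predominantly) transverse to $e$ the exchanged cost is \emph{larger} than $2\ell(\abs e)$ --- indeed $\abs{e\pm\xi}>\abs e$ and $\ell$ is increasing --- so there is no contradiction with cyclical monotonicity in that regime. Your auxiliary observation that $\tfrac12\left(\abs{h-e}+\abs{h+e}\right)\geq\abs e$ also points the wrong way: by concavity, $\ell$ evaluated at two points with a larger average can perfectly well have a larger sum, so this inequality is compatible with, not contrary to, optimality.

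The contradiction only appears when $\xi$ is taken in a thin cone around the direction of $e$ itself: with $\xi\cdot e\geq(1-\delta)\abs{\xi}\abs{e}$ one has $t^2\leq(2\delta-\delta^2)\abs{\xi}^2$ and $s^2\geq(1-\delta)^2\abs{\xi}^2$, so the second-order term is approximately $\ell''(\abs e)\abs{\xi}^2<0$, and letting $\abs{\xi}\to0$ violates the monotonicity inequality $2\ell(\abs e)\leq\ell(\abs{e+\xi})+\ell(\abs{e-\xi})$. This is precisely where Lemma \ref{Paul's} is used in the paper: it guarantees points $x_2\in\spt\mu_e$ in an arbitrarily thin cone around $e$, arbitrarily close to $x_1$. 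The fix is therefore to replace ``transverse'' by ``nearly parallel to $e$'' and redo the expansion; the rest of your outline (including the role of $C^2$ regularity and $\ell''<0$ in making the estimate quantitative) is sound.
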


\begin{proof}
For $e\in\R^d$ write $\gamma_{e}:=\gamma\res\{(x,x+e):x\in\R^d\}$. 
Suppose by contradiction that for some $e$ the measure $\gamma_e$ does not vanish. It follows that $\mu_e:=(\pi^{x})_{\#}\gamma_e$
is nontrivial as well. Moreover it does not charge small sets, since $\mu_e\leq\mu$. Therefore Lemma \ref{Paul's} implies
that $\mu_{e}$ is concentrated on
\begin{equation}
\{x:\forall\epsilon >0,\forall\delta >0, \forall u\in \R^d \text{ such that }\abs{u}=1,
\ \mu_{e} (C(x,u,\delta,\epsilon))>0\}\ ,
\end{equation}
which we will refer to as $A_{e}$. Clearly we have 
\[
A_{e}\subset\spt \mu_{e}=\pi_{x}(\spt \gamma _e)\ .
\]
 Now take $x_1$ and $x_2$ in $\spt\mu_{e}$. Then 
$(x_1,x_1+e), (x_2,x_2+e)\in\spt\gamma_{e}$, which is $l$-cyclically monotone. If we call $\xi:=x_2-x_1$
it follows that
\[
2 \ell(\abs{e})\leq \ell(\abs{e+\xi})+\ell(\abs{e-\xi})\ .
\]
 Write then a second order Taylor expansion, to yield
\[
2 \ell(\abs{e})\leq 2 \ell(\abs{e})+\sum_{i,j}\xi_{i}
\bigg[\ell''(\abs{e})\frac{e_i e_j}{\abs{e}^2}+\ell'(\abs{e})\bigg(\frac{\delta_{ij}}{\abs{e}}-\frac{e_i e_j}{\abs{e}^3}\bigg)
\bigg]\xi_{j}+o(\abs{\xi}^2)
\]
or, in other words,
\begin{align*}
0&\leq\sum_{i,j}\xi_{i}
\bigg[\ell''(\abs{e})\frac{e_i e_j}{\abs{e}}+\ell'(\abs{e})\bigg(\delta_{ij}-\frac{e_i e_j}{\abs{e}^2}\bigg)
\bigg]\xi_{j}+o(\abs{\xi}^2)\\
&=\frac{\ell''(\abs{e})}{\abs{e}}(e\cdot \xi)^{2}+\ell'(\abs{e})\abs{\xi}^{2}
-\frac{\ell'(\abs{e})}{\abs{e}^{2}}(e\cdot \xi)^{2}+o(\abs{\xi}^{2})\ .
\end{align*}
 Now take any $x_1 \in A_e$, which is nonempty by our hypothesis. Then there exists $x_2\in \spt\mu_{e}$ such that $\xi\cdot e \geq (1-\delta)\abs{\xi}\abs{e}$
with $\delta$ arbitrarily small. Then by plugging this into the previous equation, and letting $\xi\to 0$, we obtain a contradiction.
\end{proof}

The conclusion of the previous proposition is strongly different from the convex case. Indeed, consider any Borel measure $\mu$ on $\R^d$ with compact support, and fix a vector 
$e\in \R^d\setminus \{0\}$. Define $\tau _{e}$ to be the translation by $e$ and
\[
\mu_{e}:=(\tau_{e})_{\#}\mu
\]
as the translated $\mu$. Then one may prove that $\tau_{e}$ is an optimal transportation of $\mu$ onto $\mu_{e}$ if the cost is convex, while Proposition \ref{transl} implies that translations are never optimal for concave costs.

In order to prove that $\tau_{e}$ is an optimal transportation of $\mu$ onto $\mu_{e}$,
consider any map $T$ which pushes forward $\mu$ to $\mu_e$. Then, by Jensen inequality, if $x\mapsto \ell(|x|)$ is convex, we have
\begin{align*}
\int c(x,Tx)d\mu &=\int \ell(\abs{x-Tx})d\mu \geq \ell\bigg(\abs{\int x-Tx\ d\mu}\bigg)\\
&=\ell\bigg(\abs{\int x\ d\mu -\int x\ d\mu_{e}}\bigg)=\ell(\abs{e})\\
&=\int \ell(\abs{x-\tau_{e}(x)})\ d\mu\ .
\end{align*}

\subsection{On the definition of a weak $\mu-$approximate gradient }

Even if not strictly necessary for the sake of the paper, in this section we discuss the possibility of using Lemma \ref{Paul's} in order to define a sort of approximate gradient with respect to a measure $\mu$ which gives no mass to small sets. The idea is that in Section 4 we produced an ad-hoc choice of gradient, can we extend it to more general frameworks in order to use it in other situations? The answer will be both yes and no.

Before entering into details, let us set the following language.

\begin{defi} Let $\mu \in \calP(\R^n)$. We define the isotropic set of $\mu$ by
$$\isot \mu = \{ x : \forall \delta, u, \epsilon, \mu(C(x,u,\delta,\epsilon)) > 0 \}.$$

If $A$ is a Borel subset, the $\mu$-isotropic set $A$ is by definition
$$\isot_\mu (A) = A \cap \isot (\mu\res A).$$
\end{defi}

We know that, if $\mu$ does not charge small sets, then for all Boret subset $A$ (by Lemma \ref{Paul's}) we have
$$\mu(A \setminus \isot_\mu(A)) = 0.$$
We now pass to a naive definition of gradient.

\begin{defi}
We say that $L \in \R^n$ is a $\mu$-isotropic gradient of $f : A \to \R$ at $x$ if for all $\epsilon > 0$, $x$ belongs to the set
$$\isot_\mu \left(\left\{ y : \abs{f(y) - (f(x) + \langle L, y-x \rangle}) \leq \epsilon\abs{y-x}\right\}\right).$$
\end{defi}

This definition only means that we can find points $y \in \spt \mu\res D_\epsilon$ where
$$D_\epsilon = \abs{f(y) - (f(x) + \langle L, y-x \rangle}) \leq \epsilon \abs{y-x}$$
in almost arbitrary directions and arbitrary close to $x$. It satisfies some properties, for instance

\begin{prop}\label{criticpt}
If $L$ is a $\mu$-isotropic gradient of $f$ at $x$, where $f$ has a local extremum, then $L = 0$.
\end{prop}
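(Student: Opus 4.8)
The plan is to mimic the argument of Proposition \ref{maximum}, replacing the classical differentiability there with the $\mu$-isotropic gradient notion. Suppose, without loss of generality, that $f$ has a local maximum at $x$ (the minimum case is symmetric, replacing $f$ by $-f$). Suppose by contradiction that $L \neq 0$. First I would fix $\epsilon > 0$ with $\epsilon < \abs{L}/2$; by the definition of $\mu$-isotropic gradient, $x$ belongs to $\isot_\mu(D_\epsilon)$, where $D_\epsilon = \{ y : \abs{f(y) - f(x) - \langle L, y - x\rangle} \leq \epsilon\abs{y-x}\}$. This means that for every $\delta > 0$ and every radius $\eta > 0$, the cone $C(x, L/\abs{L}, \delta, \eta)$ meets $D_\epsilon \cap \spt(\mu\res D_\epsilon)$, in particular it is nonempty.

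The core computation is then the following: choose the axis direction $u = L/\abs{L}$ and a small aperture parameter $\delta$ (say $\delta$ small enough that $1 - \delta > 1/2$, i.e. $\delta < 1/2$), and pick $\eta > 0$ arbitrary. By isotropy there is a point $y \in C(x, u, \delta, \eta) \cap D_\epsilon$ with $y \neq x$. For such a point one has simultaneously $\langle y - x, L\rangle \geq (1-\delta)\abs{y-x}\,\abs{L} > \tfrac12 \abs{y-x}\,\abs{L}$ and $f(y) \geq f(x) + \langle L, y-x\rangle - \epsilon\abs{y-x}$. Combining the two,
\[
f(y) - f(x) \geq \abs{y-x}\left( \tfrac12\abs{L} - \epsilon \right) > 0,
\]
since $\epsilon < \abs{L}/2$. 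As $\eta \to 0$ we obtain points $y$ arbitrarily close to $x$ with $f(y) > f(x)$, contradicting that $x$ is a local maximum of $f$. Hence $L = 0$.

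I do not expect any serious obstacle here: the statement is essentially a transcription of Proposition \ref{maximum} into the language of the $\mu$-isotropic gradient, and the only thing that really needs care is bookkeeping the quantifiers, namely that the defining property of the $\mu$-isotropic gradient gives nonempty intersection of $D_\epsilon$ with cones of \emph{every} aperture and \emph{every} (arbitrarily small) radius around $x$, which is exactly what is needed to push $y \to x$ while keeping the directional lower bound. The minor point worth stating explicitly is that $\isot_\mu(D_\epsilon) \subset D_\epsilon$ and that nonemptiness of $C(x,u,\delta,\epsilon) \cap \isot(\mu\res D_\epsilon)$ already forces nonemptiness of $C(x,u,\delta,\epsilon)\cap D_\epsilon$, so one does not even need to invoke the full positive-mass conclusion of Lemma \ref{Paul's}, merely the nonemptiness that it yields.
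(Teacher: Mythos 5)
Your argument is exactly the adaptation the paper has in mind: the authors omit the proof of Proposition \ref{criticpt} precisely because it repeats the argument of Proposition \ref{maximum}, namely aiming a cone in the direction $L/\abs{L}$, picking $\epsilon<\abs{L}/2$, and extracting nearby points where $f$ exceeds $f(x)$. The core computation is correct.

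One point in your closing remark is, however, wrong and would sink the proof if taken literally: you claim that mere nonemptiness of $C(x,u,\delta,\eta)\cap D_\epsilon$ suffices. It does not, because $x$ itself always belongs both to every cone $C(x,u,\delta,\eta)$ (the defining inequality is an equality $0\geq 0$ at $y=x$) and to $D_\epsilon$, so nonemptiness is automatic and carries no information; what you actually need is a point $y\neq x$ in that intersection. This is where the full positive-mass conclusion $\mu(D_\epsilon\cap C(x,u,\delta,\eta))>0$ is used, combined with the standing assumption that $\mu$ does not charge small sets (hence $\mu(\{x\})=0$), so that the set of positive measure cannot reduce to $\{x\}$. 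With that correction the proof is complete; without some non-atomicity hypothesis the statement is in fact false (take $\mu=\delta_x$: every $L$ is then a $\mu$-isotropic gradient at $x$).
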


We do not give the proof, which is quite similar to that of Proposition \ref{maximum} (indeed, most proofs will be skipped in this appendix subsection, since they simply recall the proofs of Section 4).
Yet, this definition is not enough to guarantee uniqueness. Indeed, take two disjoint sets $A,B\subset \mathbb{S}^1$ with $A\cup B=\mathbb{S}^1$, such that the supports of $\lcal^1\res A$ and $\lcal^1\res B$ are both the whole $\mathbb{S}^1$ (finding two such sets is a non-trivial, but classical exercise). Now take two different vectors $L_A$ and $L_B$ and define 
$$f(x)=\begin{cases}0&\mbox{if }x=0,\\
				L_A\cdot x&\mbox{if }\frac{x}{|x|}\in A,\\
				L_B\cdot x&\mbox{if }\frac{x}{|x|}\in B.\end{cases}$$
It follows that both $L_A$ and $L_B$ are $\mu$-isotropic gradient of $f$ at any $x \in \R^d$ such that $L_A\cdot x=L_B\cdot x$. Notice that this is a ``small set'': it leaves open the question of a possible $\mu-$a.e. uniqueness, but the situation is anyway worse than what happens for the approximate gradient (Section 2), where the gradient is necessarily unique at any point where it exists.

Hence, we try to switch to a better definition of gradient. We rely on the following observation, which is essentially contained in the proof of Proposition \ref{gradientbar}			
\begin{prop}\label{isot} 
 If $A \subseteq \Omega \subseteq \R^n$ where $A$ is a Borel subset, $\Omega$ open set, and $\phi, \psi : \Omega \to \R$, then $\nabla \phi (x) = \nabla \psi (x)$ on the set
$$\isot_\mu (\{ x : \phi(x) = \psi(x), \nabla \phi(x) \text{ and } \nabla \psi(x) \text{ exist}\}).$$
\end{prop}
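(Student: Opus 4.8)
The plan is to follow the argument already carried out in the proof of Proposition~\ref{gradientbar}, extracting from it the abstract statement about two functions agreeing on a set with an isotropy property. First I would fix a point
$$x_0 \in \isot_\mu\bigl(\{ x : \phi(x)=\psi(x),\ \nabla\phi(x)\text{ and }\nabla\psi(x)\text{ exist}\}\bigr),$$
call this set $S$, and write $v_1 := \nabla\phi(x_0)$, $v_2 := \nabla\psi(x_0)$, which are well-defined by hypothesis. Arguing by contradiction, I assume $v_1 \neq v_2$.

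Next I would use differentiability of both $\phi$ and $\psi$ at $x_0$: given any $\epsilon>0$, there is $\delta(\epsilon)>0$ such that for $\tilde x\in B(x_0,\delta(\epsilon))$ one has both
$|\phi(\tilde x)-\phi(x_0)-v_1\cdot(\tilde x-x_0)|\le \epsilon|\tilde x-x_0|$ and $|\psi(\tilde x)-\psi(x_0)-v_2\cdot(\tilde x-x_0)|\le\epsilon|\tilde x-x_0|$. Subtracting, and using that $\phi(x_0)=\psi(x_0)$ and that we will only evaluate at points $\tilde x$ where additionally $\phi(\tilde x)=\psi(\tilde x)$, we get
$$|(v_2-v_1)\cdot(\tilde x-x_0)|\le 2\epsilon|\tilde x-x_0|.$$
So it suffices to exhibit, for $\epsilon$ small, a point $\tilde x\in S$, arbitrarily close to $x_0$, with $(v_2-v_1)\cdot(\tilde x-x_0) > 2\epsilon|\tilde x-x_0|$, i.e. with $\tilde x-x_0$ pointing into a thin cone around the direction $(v_2-v_1)/|v_2-v_1|$; this contradicts the above inequality. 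Precisely, choosing $\epsilon<|v_2-v_1|/2$ and applying the isotropy of $x_0$ with respect to $\mu\res S$ to the cone $C\bigl(x_0,\tfrac{v_2-v_1}{|v_2-v_1|},1-\tfrac{2\epsilon}{|v_2-v_1|},\delta(\epsilon)\bigr)$ guarantees that $\mu\res S$ gives positive mass to it, hence it meets $S$, producing the desired $\tilde x$ and the contradiction. Since $x_0$ was arbitrary in that isotropic set, $\nabla\phi=\nabla\psi$ there.

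I do not expect a genuine obstacle here: the entire content is already present in Proposition~\ref{gradientbar}, and the only thing to be careful about is bookkeeping — making sure the isotropy is used with respect to the restricted measure $\mu\res S$ (so that the cone actually meets the set $S$ on which $\phi=\psi$ and both gradients exist), and that the opening parameter $1-2\epsilon/|v_2-v_1|$ lies in $(0,1)$, which is exactly why one picks $\epsilon<|v_2-v_1|/2$. The statement should simply be closed by remarking that this reproduces the relevant part of the proof of Proposition~\ref{gradientbar} verbatim, with $\phi_i,\phi_{i'}$ replaced by $\phi,\psi$; accordingly the proof can be left to the reader or reduced to a one-line reference, consistent with the convention announced for this appendix subsection.
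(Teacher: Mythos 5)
Your argument is correct and is exactly the paper's intended proof: the paper gives no separate argument for this proposition, saying only that it is ``essentially contained in the proof of Proposition~\ref{gradientbar}'', whose contradiction scheme you reproduce verbatim with $\phi_i,\phi_{i'}$ replaced by $\phi,\psi$ and the isotropy correctly taken with respect to $\mu\res S$. The one (shared, implicit) point worth flagging is that producing a point $\tilde x\in S$ with $\tilde x\neq x_0$ inside the cone uses the standing assumption of this subsection that $\mu$ charges no small sets (in particular no singletons), so that positive $\mu\res S$-mass of the cone yields points of $S$ other than $x_0$ itself.
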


The following proposition is an attempt at defining a uniquely determined $\mu$-approximate gradient under hypotheses satisfied by the Kantorovitch potential in Section \ref{sec:general}. 

\begin{prop}\label{mugrad}
Given $A \subseteq \Omega \subseteq \R^n$, with $A$ Borel and $\Omega$ open, given $\mu \in \calP(\R^n)$ which does not charge small sets, and $f : A \to \R^n$ measurable, we say that $f$ is $\mu$-differentiable on $A$ if there exists a countable family $(\phi_n)_{n \in \N}$, $\phi_n : \Omega \to \R$ such that
$$\forall x \in A, \exists n \in \N, \quad \phi_n(x) = f(x) \text{ and } \nabla \phi_n(x) \text{ exists}.$$
Then there exists a $\mu$-a.e. unique function $\nabla_\mu f : A \to \R^n$ enjoying the property
\begin{equation}\label{gradprop}
\begin{split}
\text{for all $\phi : \Omega \to \R$, $\nabla_\mu f$ coincides with $\nabla \phi$ on $\mu$-a.e. point $x$}\\ \text{such that $f(x) = \phi(x)$ and $\nabla \phi (x)$ exists.}
\end{split}
\end{equation}
\end{prop}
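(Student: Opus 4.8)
The plan is to construct $\nabla_\mu f$ explicitly from the given countable family $(\phi_n)_{n\in\N}$ and then verify both the coincidence property \eqref{gradprop} and the $\mu$-a.e. uniqueness. First I would define, for each $n$, the Borel set $E_n := \{x \in A : \phi_n(x) = f(x) \text{ and } \nabla\phi_n(x) \text{ exists}\}$; by hypothesis $\bigcup_n E_n = A$. Then set $\nabla_\mu f(x) := \nabla\phi_n(x)$ where $n = n(x)$ is the smallest index with $x \in E_n$. This is clearly a well-defined Borel function on $A$. The content of the statement is that this choice does not really depend on the indexing, up to $\mu$-negligible sets.

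The key step is to show that for \emph{any} $\phi : \Omega \to \R$, on $\mu$-a.e.\ point $x$ with $f(x) = \phi(x)$ and $\nabla\phi(x)$ existing, one has $\nabla_\mu f(x) = \nabla\phi(x)$. Fix such a $\phi$ and let $F := \{x \in A : f(x) = \phi(x),\ \nabla\phi(x) \text{ exists}\}$. For each $n$, on the set $E_n \cap F$ both $\phi_n$ and $\phi$ agree with $f$, hence with each other, and both are differentiable there. Applying Proposition \ref{isot} (with the roles of $\phi, \psi$ played by $\phi_n$ and $\phi$) we get $\nabla\phi_n(x) = \nabla\phi(x)$ on $\isot_\mu(E_n \cap F)$. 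Since $\mu$ does not charge small sets, Lemma \ref{Paul's} gives $\mu((E_n\cap F) \setminus \isot_\mu(E_n\cap F)) = 0$, so $\nabla\phi_n = \nabla\phi$ $\mu$-a.e.\ on $E_n\cap F$. As $\nabla_\mu f$ equals $\nabla\phi_{n(x)}$ and $x \in E_{n(x)}$, taking the countable union over $n$ shows $\nabla_\mu f = \nabla\phi$ $\mu$-a.e.\ on $F = \bigcup_n (E_n \cap F)$, which is exactly \eqref{gradprop}.

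For uniqueness, suppose $g : A \to \R^n$ also satisfies \eqref{gradprop}. Apply the property of $g$ to each $\phi = \phi_n$: then $g = \nabla\phi_n$ $\mu$-a.e.\ on $E_n$. But $\nabla_\mu f = \nabla\phi_n$ on the (smaller) set where $n(x) = n$, and in particular $\nabla_\mu f = \nabla\phi_n$ $\mu$-a.e.\ on that piece; combining, $g = \nabla_\mu f$ $\mu$-a.e.\ on $\{x : n(x) = n\}$, and the countable union over $n$ covers $A$. Hence $g = \nabla_\mu f$ $\mu$-a.e.\ on $A$.

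The main obstacle is making sure Proposition \ref{isot} is applied correctly: that proposition is stated for two functions defined on the open set $\Omega$, whereas here $\phi$ and the $\phi_n$ may only be given on a Borel subset or need to be viewed as functions on $\Omega$ — one must check that on $E_n \cap F$ the hypotheses (equality of the two functions at the point, existence of both gradients) genuinely hold, which they do by construction. A secondary technical point is the Borel measurability of the sets $E_n$ and of $\nabla_\mu f$, so that $\isot_\mu$ and Lemma \ref{Paul's} can be invoked; this is routine since differentiability points and the gradient map are Borel. Everything else is bookkeeping with countable unions and the fact that $\mu$ charges no small set.
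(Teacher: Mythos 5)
Your construction and verification follow exactly the route the paper sketches: define $\nabla_\mu f$ piecewise as $\nabla\phi_n$ on the sets $E_n$, and use Proposition \ref{isot} together with Lemma \ref{Paul's} and countable unions to get both the coincidence property \eqref{gradprop} and $\mu$-a.e.\ uniqueness. The argument is correct and in fact fills in the bookkeeping (the ``smallest index'' selection and the explicit uniqueness step) that the paper leaves implicit.
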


In order to prove the statement above, one can build $\nabla_\mu f$ as being equal to $\nabla \phi_n$ on almost all the set $\{ x : f(x) = \phi_n(x)\mbox{ and $\nabla \phi_n (x)$ exists}\}$, which is well-defined a.e. by Proposition \ref{isot}. The same proposition allows us to check easily that it satisfies the desired property, and is as such unique.
%
%

The connection between the two notions we defined is contained in the following statement.

\begin{prop}\label{mudiff_isot}
If $f : A \to \R$ is $\mu$-differentiable, then for a.e $x \in A$, $\nabla_\mu f(x)$ is a $\mu$-isotropic gradient of $f$.
\end{prop}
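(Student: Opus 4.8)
The plan is to combine the two notions directly: unwinding the definition of $\mu$-differentiability, we get a countable family $(\phi_n)$ with the property that every $x\in A$ has some $n$ with $\phi_n(x)=f(x)$ and $\nabla\phi_n(x)$ existing. First I would set $A_n := \{x\in A : f(x)=\phi_n(x),\ \nabla\phi_n(x)\text{ exists}\}$, so that $A=\bigcup_n A_n$, and recall from the construction of $\nabla_\mu f$ in Proposition \ref{mugrad} that on $\mu$-almost every point of $A_n$ we have $\nabla_\mu f(x)=\nabla\phi_n(x)$. Discarding a $\mu$-null set, I may therefore assume $\nabla_\mu f(x)=\nabla\phi_n(x)$ for all $x\in A_n$ and all $n$ simultaneously (countable union of null sets).

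The next step is the local reduction to $\phi_n$. Fix $n$, fix $x\in A_n$ (outside the exceptional null set), and write $L:=\nabla_\mu f(x)=\nabla\phi_n(x)$. Since $\nabla\phi_n$ exists at $x$, for every $\epsilon>0$ there is a ball $B(x,r)$ on which $\abs{\phi_n(y)-\phi_n(x)-\langle L,y-x\rangle}\le \frac{\epsilon}{2}\abs{y-x}$, hence $B(x,r)\cap\Omega$ is contained in the set $E_\epsilon := \{y : \abs{\phi_n(y)-(\phi_n(x)+\langle L,y-x\rangle)}\le\epsilon\abs{y-x}\}$. On the other hand, the set $D_\epsilon := \{y : \abs{f(y)-(f(x)+\langle L,y-x\rangle)}\le\epsilon\abs{y-x}\}$ appearing in the definition of $\mu$-isotropic gradient contains $A_n\cap E_\epsilon$, because on $A_n$ one has $f=\phi_n$ and $f(x)=\phi_n(x)$. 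So $D_\epsilon \supseteq A_n\cap B(x,r)$, at least near $x$.

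Finally I would invoke Lemma \ref{Paul's}, exactly as in Remark \ref{isotropy}: the measure $\mu\res A_n$ does not charge small sets (it is $\le\mu$), so $\mu$-a.e. point of $A_n$ lies in $\isot(\mu\res A_n)$, i.e.\ $\mu(A_n\cap C(x,u,\delta,\epsilon))>0$ for all $u,\delta,\epsilon$. Adding these points to the exceptional null set as well, for the remaining $x\in A_n$ we get, for every $u\in\mathbb S^{n-1}$, $\delta>0$ and small enough $\eta>0$, that $\mu(A_n\cap C(x,u,\delta,\eta))>0$; since $A_n\cap B(x,\eta)\subseteq D_\epsilon$ this forces $\mu((A_n\cap D_\epsilon)\cap C(x,u,\delta,\eta))>0$, and a fortiori $\mu\res D_\epsilon$ gives positive mass to every cone at $x$. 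Because $\mu\res D_\epsilon \le \mu\res A_\epsilon$ where $A_\epsilon$ is the set in the definition of $\mu$-isotropic gradient (indeed $D_\epsilon$ is exactly that set for $L$ and the given $\epsilon$), this says precisely $x\in\isot_\mu(D_\epsilon)$ for every $\epsilon$, which is the definition of $L$ being a $\mu$-isotropic gradient of $f$ at $x$. Taking the union over $n$ of the negligible sets discarded, the conclusion holds for $\mu$-a.e.\ $x\in A$.

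The main obstacle is bookkeeping the exceptional sets correctly: one must check that the cone mass estimate for $\mu\res A_n$ transfers to $\mu\res D_\epsilon$ uniformly in $\epsilon$ (which it does, since $A_n\cap B(x,\eta)\subseteq D_\epsilon$ once $\eta$ is small depending on $\epsilon$, so the cone $C(x,u,\delta,\eta)$ intersected with $A_n$ already lies in $D_\epsilon$), and that only countably many null sets are thrown away. The rest is a routine diagonal argument combining the differentiability of each $\phi_n$ with the isotropy lemma, entirely parallel to Proposition \ref{gradientbar}.
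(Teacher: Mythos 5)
Your argument is correct and is exactly the route the paper intends (the paper omits this proof, remarking only that the appendix statements ``simply recall the proofs of Section 4''): decompose $A$ into the countable family $A_n$, use the construction of $\nabla_\mu f$ from Proposition \ref{mugrad} to identify $\nabla_\mu f=\nabla\phi_n$ on $A_n$ up to a null set, apply Lemma \ref{Paul's} to $\mu\res A_n$ as in Remark \ref{isotropy}, and transfer the cone-mass estimate to $D_\epsilon$ via the inclusion $A_n\cap B(x,r)\subseteq D_\epsilon$ coming from the differentiability of $\phi_n$ at $x$. The bookkeeping of the countably many exceptional null sets and the monotonicity of the cones in $\eta$ are handled correctly, so there is nothing to add.
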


Together with Proposition \ref{criticpt}, this would allow us to follow the usual strategy described in Section \ref{sec:gene_facts} to prove the existence of an optimal $T$ based on the Kantorovich potential $\phi$ and $\nabla_\mu \phi$.

\bigskip

 {\bf Acknowledgments} This paper is part of the work of the ANR project ANR-12-BS01-0014-01 GEOMETRYA, whose support is gratefully acknowledged by the third author. The work started when the second author was master student at Paris-Sud, financed by {\it Fondation Math\'{e}matique Jacques Hadamard}, which is also gratefully acknowledged for its support.

\bigskip
\bigskip

{\small

\noindent\begin{minipage}{8cm}
Paul Pegon -- Filippo Santambrogio,\\
Laboratoire de Math\'ematiques d'Orsay,\\
 Universit\'e Paris-Sud,\\
 91405 Orsay cedex, FRANCE,\\
  {\tt paul.pegon@math.u-psud.fr}\\
    {\tt filippo.santambrogio@math.u-psud.fr}
    \end{minipage}
    %
\begin{minipage}{6cm}
Davide Piazzoli,\\
Cambridge Centre for Analysis,\\
University of Cambridge,\\
Wilberforce Road,\\
Cambridge, CB3 0WB,  UK,\\
{\tt D.Piazzoli@dpmms.cam.ac.uk}
  \end{minipage}
%
  
  }
\end{document}